\renewenvironment{proof}[1][\proofname]{\par
  \vspace{-\topsep}
  \pushQED{\qed}%
  \normalfont
  \topsep8pt \partopsep0pt 
  \trivlist
  \item[\hskip\labelsep
        \itshape
    #1\@addpunct{.}]\ignorespaces
}{%
  \popQED\endtrivlist\@endpefalse
  \addvspace{6pt plus 6pt} 
}
\def\thm@space@setup{%
  \thm@preskip=0.3cm
  \thm@postskip=0cm 
}
\newtheorem{thm}{Theorem}[section]
\newtheorem{prop}[thm]{Proposition}
\newtheorem{lem}[thm]{Lemma}
\newtheorem{problem}{Problem}
\newtheorem{prob}[thm]{Problem}
 \newtheorem{conjecture}{Conjecture}
 \newtheorem{theorem}{Theorem}
\theoremstyle{definition}
\setlist[enumerate]{itemsep=2ex, topsep=2ex} 
\setlist[itemize]{itemsep=2ex, topsep=2ex}
\newcommand{\C}{\mathcal{C}}
\newcommand{\N}{\mbox{\rm N}}
\newcommand{\E}{\mathbb{E}}
\newcommand{\al}{\alpha}
\newcommand{\be}{\beta}
\newcommand{\lam}{\lambda}
\newcommand{\Om}{\Omega}
\newcommand{\Del}{\Delta}
\newcommand{\pa}{\partial}
\renewcommand{\l}{\left}
\renewcommand{\r}{\right}
\newcommand{\half}{\frac{1}{2}}
\newcommand{\sm}{\setminus}
\newcommand{\sub}{\subseteq}
\renewcommand{\c}[1]{\mathcal{#1}}
\newcommand{\rec}[1]{\frac{1}{#1}}
\newcommand{\f}[2]{\frac{#1}{#2}}
\newcommand{\floor}[1]{\l\lfloor #1\r\rfloor}
\newcommand{\ceil}[1]{\l\lceil #1\r\rceil}
\newcommand{\mr}[1]{\mathrm{#1}}
\newcommand{\ex}{\mr{ex}}
\title{Counting Hypergraphs with Large Girth}
\author{Sam Spiro\thanks{ Department of Mathematics, University of California, San Diego, 9500 Gilman Drive, La Jolla, CA 92093-0112, USA. E-mail: sspiro@ucsd.edu. This material is based upon work supported by the National Science Foundation Graduate Research Fellowship under Grant No. DGE-1650112.}\and
	Jacques Verstra\"ete\thanks{Department of Mathematics, University of California, San Diego, 9500 Gilman Drive, La Jolla, CA 92093-0112, USA. E-mail: jacques@ucsd.edu. Research supported by the National Science Foundation Awards DMS-1800332 and DMS-1952786,
		and by the Institute for Mathematical Research (FIM) of ETH Z\"urich.}}
\date{\today}
\begin{document}

	\maketitle

\vspace{-0.3in}

\begin{abstract}
Morris and Saxton~\cite{MS} used the method of containers to bound the number of $n$-vertex graphs with $m$ edges containing no $\ell$-cycles, and hence  graphs of girth more than $\ell$. We consider a generalization to $r$-uniform hypergraphs. The {\em girth} of a hypergraph $H$ is the minimum $\ell\ge 2$ such that there exist distinct vertices $v_1,\ldots,v_\ell$ and hyperedges $e_1,\ldots,e_\ell$ with $v_i,v_{i+1}\in e_i$ for all $1\le i\le \ell$. Letting ${\mbox{\rm N}}_m^r(n,\ell)$ denote the number of $n$-vertex $r$-uniform hypergraphs with $m$ edges and girth larger than $\ell$ and defining $\lambda = \lceil (r - 2)/(\ell - 2)\rceil$, we show  
\[ {\mbox{\rm N}}_m^r(n,\ell) \leq {\mbox{\rm N}}_m^2(n,\ell)^{r - 1 + \lambda}\]
which is tight when $\ell - 2 $ divides $r - 2$ up to a $1 + o(1)$ term in the exponent. This result is used to address the extremal problem for subgraphs of girth more than $\ell$ in random $r$-uniform hypergraphs.
\end{abstract}
\vspace{-1.1em}
\textbf{Keywords:} Hypergraph, Cycle, Berge.
\vspace{-2em}
\section{Introduction}
Let $\c{F}$ be a family of $r$-uniform hypergraphs, or $r$-graphs for short.  Define $\N^r(n,\c{F})$ to be the number of $\c{F}$-free $r$-graphs on $[n]:=\{1,\ldots,n\}$, and define $\N_m^r(n,\c{F})$ to be the number of $\c{F}$-free $r$-graphs on $[n]$ with exactly $m$ hyperedges.  If $\ex(n,\c{F})$ denotes the maximum number of hyperedges in an $\c{F}$-free $r$-graph on $[n]$, then it is not difficult to see that for $1 \leq m \leq \ex(n,\c{F})$,
\begin{align*}
\l(\f{\ex(n,\c{F})}{m}\r)^m \leq {\ex(n,\c{F}) \choose m} \le  & \;\;\N_m^r(n,\c{F})\le {{n \choose r} \choose m} \leq \l(\f{en^r}{m}\r)^m,
\end{align*}
and summing over $m$ one obtains $2^{\Omega(\ex(n,\c{F}))} = \N^r(n,\c{F}) = 2^{O(\ex(n,\c{F})\log n)}$. The state of the art for bounding $\N^r(n,\c{F})$ is the work of Ferber, McKinley, and Samotij~\cite{FMS} which shows that if $F$ is an $r$-uniform hypergraph with $\ex(n,F) = O(n^{\alpha})$ and $\al$ not too small, then
\begin{equation*}
\N^r(n,F)=2^{O(n^\al)},
\end{equation*}
and this result encompasses many of the earlier results in the area \cite{BNS, BS, CT, MS}.

There are relatively few families for which effective bounds for $\N_m^r(n,\c{F})$ are known.  One family where results are known is $\C_{[\ell]}=\{C_3,C_4,\dots,C_{\ell}\}$, the family of all graph cycles of length at most $\ell$.  Morris and Saxton implicitly proved the following in this setting:
\begin{thm}[\cite{MS}]\label{thm:MS}
	For $\ell\ge 3$ and $k = \lfloor \ell/2\rfloor$, there exists a constant $c=c(\ell)>0$ such that if $n$ is sufficiently large and $m\ge n^{1+1/(2k - 1)}(\log n)^2$, then
	\[\N_m^2(n,\C_{[\ell]}) \le e^{cm}(\log n)^{(k - 1)m}\l(\f{n^{1+1/k}}{m}\r)^{k m}.\]
\end{thm}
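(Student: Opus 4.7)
The plan is to follow the approach of Morris and Saxton, combining a balanced supersaturation theorem for short cycles with the hypergraph container method. First observe that any $\c{C}_{[\ell]}$-free graph is in particular $C_{2k}$-free (since $2k \le \ell$), so it suffices to bound $\N_m^2(n, \{C_{2k}\})$.

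The main technical ingredient is a balanced supersaturation theorem for $C_{2k}$: there exist constants $c_0, c_1 > 0$ such that any graph $G$ on $[n]$ with $q := e(G) \ge c_0 n^{1+1/k}$ contains at least $c_1 q \cdot (q/n^{1+1/k})^{2k-1}$ copies of $C_{2k}$, and moreover each $j$-element subset of $E(G)$ lies in at most $O((q/n^{1+1/k})^{2k-1-j})$ such copies for every $1 \le j \le 2k-1$. Proving this is the crux of the argument, and it is typically carried out by passing to an almost-regular subgraph and iteratively bounding codegrees through neighbourhood products (in the spirit of Bondy--Simonovits path-counting, but requiring sharp control on how cycles are distributed across edge-subsets).

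Given the balanced supersaturation bound, I would apply the hypergraph container lemma of Balogh--Morris--Samotij and Saxton--Thomason to the $2k$-uniform hypergraph on vertex set $\binom{[n]}{2}$ whose hyperedges are the edge-sets of the copies of $C_{2k}$ in $K_n$. The codegree hypotheses required by the container lemma are exactly those supplied by the supersaturation statement, and iterating the lemma $O(\log n)$ times produces a family $\c{F}$ of container graphs with the following properties: every $C_{2k}$-free graph on $[n]$ is a subgraph of some $H \in \c{F}$; every container satisfies $e(H) = O(n^{1+1/k})$; and each $H$ is determined by a fingerprint $T$ of size at most $(k-1)m$ drawn from a saturated set of size $O(n^{1+1/k} \log n)$. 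The hypothesis $m \ge n^{1+1/k}(\log n)^2$ is used to guarantee that at every stage of the iteration the fingerprint is small compared to $m$, so that $\binom{e(H)}{m-|T|} \le e^{O(m)} \binom{e(H)}{m}$.

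Finally, I would bound
\[\N_m^2(n, \{C_{2k}\}) \le \sum_{H \in \c{F}} \binom{e(H)}{m} \le \binom{O(n^{1+1/k} \log n)}{(k-1)m}\binom{O(n^{1+1/k})}{m},\]
so that the container-size factor contributes $e^{O(m)}\,(n^{1+1/k}/m)^m$ and the fingerprint factor contributes $((\log n)\,n^{1+1/k}/m)^{(k-1)m}$, and together they yield the claimed bound with exponent $km$ on $(n^{1+1/k}/m)$ and $(k-1)m$ on $\log n$. The main obstacle throughout is proving a balanced supersaturation sharp enough that the container iteration reaches containers of near-extremal size; once that step is secured, the remaining container iteration and counting are careful but now-standard applications of the container framework.
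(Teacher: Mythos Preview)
Your approach is correct and is, in essence, the Morris--Saxton argument that the paper relies on. The paper's own proof is much terser: rather than re-running balanced supersaturation plus the container iteration, it simply quotes \cite[Theorem~5.1]{MS} as a black box, which for each admissible $t$ supplies a family $\c{G}_k(n,t)$ of at most $\exp\bigl(Ct^{-1/(k-1)}n^{1+1/k}\log t\bigr)$ containers, each of size at most $tn^{1+1/k}$, covering all $C_{2k}$-free graphs. The paper then takes $t=(n^{1+1/k}\log n/m)^{k-1}$ and reads off the bound $\N_m^2(n,C_{2k})\le |\c{G}_k(n,t)|\cdot\binom{tn^{1+1/k}}{m}$. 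Your outline is effectively a sketch of how \cite[Theorem~5.1]{MS} is proved, followed by the same final counting step, so the two arguments coincide once you package your supersaturation/container iteration into a single container statement.

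One point to correct: you write that the hypothesis $m\ge n^{1+1/k}(\log n)^2$ is what guarantees the fingerprint is small enough. The theorem actually assumes only $m\ge n^{1+1/(2k-1)}(\log n)^2$, which is a weaker lower bound on $m$; with the paper's choice of $t$ this is precisely the condition needed for $t$ to lie in the admissible range of \cite[Theorem~5.1]{MS}. If your iteration as sketched genuinely requires $m\ge n^{1+1/k}(\log n)^2$, you would be proving a weaker statement than claimed, so you should check that your fingerprint bookkeeping goes through down to the threshold $n^{1+1/(2k-1)}(\log n)^2$.
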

In the appendix we give a formal proof of this result. Theorem~\ref{thm:MS} generalizes earlier results of F\"{u}redi~\cite{F} when $\ell = 4$ and of Kohayakawa, Kreuter, and Steger~\cite{KKS}.
Erd\H{o}s and Simonovits~\cite{ES} conjectured for $\ell \geq 3$ and $k = \lfloor \ell/2 \rfloor$,
\begin{equation}\label{eq:es}
\ex(n,\C_{[\ell]}) = \Om(n^{1 + 1/k})
\end{equation}
 which is only known to hold for $\ell \in \{3,4,5,6,7,10,11\}$ -- see F\"{u}redi and Simonovits~\cite{FS} and also~\cite{V} for details. The truth of this conjecture would imply that the upper bound in Theorem \ref{thm:MS} is tight up to the exponent of $(\log n)^m$.

\medskip

In this paper we extend Theorem \ref{thm:MS} to $r$-graphs. For $\ell \geq 2$,
an $r$-graph $F$ is a {\em Berge $\ell$-cycle} if there exist distinct vertices $v_1,\ldots,v_\ell$ and distinct hyperedges $e_1,\ldots,e_\ell$ with $v_i,v_{i+1}\in e_i$ for all $1\le i\le \ell$.  In particular, a hypergraph $H$ is said to be {\em linear} if  it contains no Berge 2-cycle.
We denote by $\C_{\ell}^r$ the family of all $r$-uniform Berge $\ell$-cycles.
If $H$ is an $r$-graph containing a Berge cycle, then the {\em girth} of $H$ is the smallest $\ell\ge 2$ such that $H$ contains a Berge $\ell$-cycle.
Let $\C_{[\ell]}^r = \C_{2}^r \cup \C_3^r \cup \dots \cup \C_{\ell}^r$ denote the family of all $r$-uniform Berge cycles of length at most $\ell$.  With this $\C_{[\ell]}^2 = \C_{[\ell]}$, and an $r$-graph has girth larger than $\ell$ if and only if it is $\C_{[\ell]}^r$-free.  We again emphasize that hypergraphs with girth $\ell\ge 2$ are all linear.
We write $\N_m^r(n,\ell) := \N_m^r(n,\C_{[\ell]}^r)$ for the number of $n$-vertex $r$-graphs with $m$ edges and girth larger than $\ell$ and
$\N^r(n,\ell):=\N^r(n,\C_{[\ell]}^r)$ for the number of $n$-vertex $r$-graphs with girth larger than $\ell$.

Balogh and Li~\cite{BL} proved for all $\ell,r\ge3$ and $k = \lfloor \ell/2 \rfloor$,
\begin{equation*}
\N^r(n,\ell) = 2^{O(n^{1+ 1/k})}\label{eq:BLGirth}.
\end{equation*}
This upper bound would be tight up to a $n^{o(1)}$ term in the exponent if the following is true:

\begin{conjecture}\label{conj:girth}
	For all $\ell \geq 3$ and $r\ge 2$ and $k = \lfloor \ell/2\rfloor$, \[\ex(n,\C_{[\ell]}^r) = n^{1+ 1/k-o(1)}.\]
\end{conjecture}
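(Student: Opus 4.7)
The conjecture asserts a matching pair of bounds $\ex(n, \C_{[\ell]}^r) = O(n^{1+1/k})$ and $\ex(n, \C_{[\ell]}^r) \ge n^{1+1/k - o(1)}$. The upper bound is the tractable half: given an $r$-graph $H$ of girth $>\ell$ on $n$ vertices with $m$ edges, form its bipartite incidence graph $B$ with parts $V(H)$ and $E(H)$. A Berge $j$-cycle in $H$ is exactly a $2j$-cycle in $B$ alternating between the two parts, so $B$ has girth at least $2\ell+2$. Every $E(H)$-vertex of $B$ has degree $r$ and the average $V(H)$-degree is $rm/n$, so a Moore-type bound for bipartite graphs of prescribed girth (in the Alon--Hoory--Linial spirit, after the standard cleanup that replaces the average degree by a minimum degree of the same order) applied to a BFS tree of depth $\ell$ from a surviving $V(H)$-vertex yields $(rm/n)^k(r-1)^k \le O(n)$, and hence $m \le O_r(n^{1+1/k})$.

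The lower bound is the main obstacle. When $r=2$ it is precisely the Erd\H{o}s--Simonovits girth conjecture (\ref{eq:es}), known only for $\ell \in \{3,4,5,6,7,10,11\}$, so any proof of the full conjecture must at least encompass this open case. For $r \ge 3$ the natural strategy is \emph{lifting}: start from a graph $G$ of girth $>\ell$ with $n^{1+1/k}$ edges and extend each edge of $G$ into an $r$-hyperedge. Attaching $r-2$ fresh vertices per edge preserves girth but inflates the vertex count to $\Theta(n^{1+1/k})$, producing only a linear number of hyperedges in the ambient vertex count; reusing any attached vertex across two hyperedges instantly creates a Berge $2$-cycle together with the original-graph endpoints, so any denser lift must be mediated by an auxiliary design of comparable difficulty.

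Equivalently, by inverting the incidence-graph correspondence used for the upper bound, one seeks a bipartite graph on $(V,E)$ with $|V|=n$, $E$-side $r$-regular, girth $>2\ell$, and $|E|$ as large as possible. Incidence graphs of generalized $(\ell+1)$-gons attain this order of magnitude only when the uniformity grows with $n$ (giving $r = \Theta(n^{1/\ell})$ rather than a fixed $r$), so one would need to thin such a structure to constant uniformity while controlling girth, or to combine a known $r=2$ extremal construction with a randomized thickening of each edge followed by a careful deletion step. I expect this fixed-uniformity lifting, together with the open $r=2$ case, to be the principal technical difficulty, and it is plausible that the conjecture is in fact equivalent to (\ref{eq:es}) via a sufficiently strong lifting theorem.
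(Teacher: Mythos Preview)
The statement you are addressing is Conjecture~\ref{conj:girth}, which the paper does \emph{not} prove; it is explicitly presented as open for $\ell \ge 5$ and $r \ge 3$, and the paper notes that already the case $r=2$ is the Erd\H{o}s--Simonovits conjecture~(\ref{eq:es}). There is therefore no proof in the paper to compare your proposal against.

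Your upper-bound argument via the bipartite incidence graph is correct and self-contained: girth larger than $\ell$ in $H$ is equivalent to girth at least $2\ell+2$ in the incidence graph $B$, and a Moore-type count on a depth-$\ell$ BFS tree then yields $m = O_r(n^{1+1/k})$. The paper obtains this direction differently, simply citing Gy\H{o}ri--Lemons~\cite{GL} for the stronger single-length bound $\ex(n,\C_\ell^r) = O(n^{1+1/k})$. Your incidence-graph argument only works when \emph{all} short Berge cycles are forbidden, but that is precisely the hypothesis here, and it has the advantage of being elementary.

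For the lower bound you do not give a proof, and you are right not to: your analysis that the $r=2$ case is already open and that naive edge-thickening to $r\ge 3$ fails is accurate and matches the paper's own discussion. The paper adds two points you might incorporate. First, the conjecture \emph{is} known for $\ell\in\{3,4\}$ and all $r\ge 3$ (via Ruzsa--Szemer\'edi type constructions~\cite{EFR,LV,RS,TV}), so the open range begins at $\ell=5$. Second, the $r\ge 3$ case is formally \emph{stronger} than~(\ref{eq:es}), since picking an arbitrary pair from each hyperedge of an extremal $r$-graph of girth larger than $\ell$ produces a graph of girth larger than $\ell$ with the same edge count; this confirms your suspicion that the two problems are tightly linked, but in the direction that makes the hypergraph problem harder, not easier. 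In summary, your proposal is a correct assessment of the problem's status rather than a proof.
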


Conjecture~\ref{conj:girth} holds for $\ell=3,4$ and $r \geq 3$ -- see~\cite{EFR,LV,RS,TV} -- but is open and evidently difficult for $\ell \geq 5$ and $r \geq 3$.
Gy\"{o}ri and Lemons~\cite{GL} proved $\ex(n,\C_{\ell}^r) = O(n^{1 + 1/k})$ with $k=\floor{\ell/2}$, so the conjecture concerns constructions of dense $r$-graphs of girth more than $\ell$.
The conjecture for $r = 2$ without the $o(1)$ is (\ref{eq:es}), and for each $r \geq 3$ is stronger than (\ref{eq:es}), as can be seen by forming a graph from an extremal $n$-vertex $r$-graph of girth more than $\ell$ whose edge set consists of an arbitrary pair of vertices from each hyperedge. We emphasize that the $o(1)$ term in Conjecture~\ref{conj:girth} is necessary for $\ell = 3$, due to the Ruzsa-Szemer\'{e}di Theorem~\cite{EFR,RS}, and for $\ell = 5$, due to work of Conlon, Fox, Sudakov and Zhao~\cite{CFSZ}.

\medskip

\begin{center}
{\sc 1.1 Counting $r$-graphs of large girth.}
\end{center}

In this work we simplify and refine the arguments of Balogh and Li~\cite{BL} to prove effective and almost tight bounds on $\N_m^r(n,\ell)$ relative to $\N_m^2(n,\ell)$.

\begin{thm}\label{thm:reduceGen}
	Let $\ell,r\ge 3$ and $\lambda = \lceil (r-2)/(\ell-2) \rceil$. Then for all $m,n\geq 1$,
\begin{equation}\label{eq:reduceGen}
	\N_m^r(n,\ell) \le \N_m^2(n,\ell)^{r-1+ \lambda}.
\end{equation}
\end{thm}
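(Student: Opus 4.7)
The plan is to construct an injection from $\N_m^r(n,\ell)$ into $\N_m^2(n,\ell)^{r-1+\lambda}$; the bound \eqref{eq:reduceGen} will then follow by counting the image. Given $H\in\N_m^r(n,\ell)$, fix the standard order on $[n]$ and write each hyperedge as $e=\{v_1^e<\cdots<v_r^e\}$. I would associate to $H$ the tuple consisting of $r-1$ \emph{step graphs} $G_1,\ldots,G_{r-1}$, where $G_j$ has edge set $\{\{v_j^e,v_{j+1}^e\}:e\in E(H)\}$, together with $\lambda$ \emph{anchor graphs} $A_1,\ldots,A_\lambda$, where $A_s$ has edge set $\{\{v_{1+(s-1)(\ell-2)}^e,\,v_{1+\min(s(\ell-2),\,r-1)}^e\}:e\in E(H)\}$, i.e.\ a ``chord'' skipping the $\ell-3$ internal vertices in the block of positions from $1+(s-1)(\ell-2)$ to $1+s(\ell-2)$. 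Each of these $r-1+\lambda$ graphs has exactly $m$ edges (one per hyperedge, in a fixed pair of positions). Moreover, any cycle of length $k\le\ell$ in any one of these graphs is built from $k$ edges contributed by $k$ distinct hyperedges of $H$, so it witnesses a Berge $k$-cycle in $H$; since $H$ has girth $>\ell$, no such cycle exists, and every graph in the tuple lies in $\N_m^2(n,\ell)$.

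For injectivity, I would reconstruct $H$ from the tuple one anchor at a time. For each edge $\{a,b\}\in A_s$, the hyperedge $e$ that contributed it has its intermediate vertices in positions $1+(s-1)(\ell-2)+1,\ldots,s(\ell-2)$ determined by a length-$(\ell-2)$ path $a=u_0-u_1-\cdots-u_{\ell-2}=b$ whose $t$-th edge lies in $G_{(s-1)(\ell-2)+t}$. The ``canonical'' such path exists, and the key claim is that it is unique. If $P'$ were another such path, then $P'$ together with the anchor edge $\{a,b\}$ would form a closed walk of length $\ell-1$ in the union of $A_s$ and $\ell-2$ step graphs. Each step graph contributes at most one edge per hyperedge and the anchor graph is disjoint from the step graphs, so the edges in this walk come from essentially distinct hyperedges. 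Extracting a simple cycle from the closed walk then yields a Berge cycle in $H$ of length at most $\ell-1<\ell$, contradicting the girth assumption. Iterating this for $s=1,\ldots,\lambda$ and gluing the reconstructed segments across chunks produces all $r$ vertices of each hyperedge of $H$, so the tuple determines $H$ uniquely.

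The principal technical obstacle is the uniqueness step. If some step graph $G_{(s-1)(\ell-2)+t}$ contributes the edge of the anchor hyperedge itself to $P'$, then $P'$ shares edges with the canonical path and the closed walk $P'\cup\{a,b\}$ may fail to be a simple $(\ell-1)$-cycle. The clean way to handle this is to analyze the symmetric difference of $P'$ and the canonical path, which is an Eulerian subgraph of the union of the step graphs and therefore decomposes into edge-disjoint cycles; one then argues that at least one of these cycles is a genuine Berge cycle in $H$ of length at most $\ell$. The chunk length $\ell-2$ is exactly calibrated so that a path of this length together with an anchor produces a cycle of length at most $\ell-1$, so the girth assumption on $H$ kills every alternative path; this is why $\lambda=\lceil(r-2)/(\ell-2)\rceil$ chunks suffice (and are needed) to cover the $r-2$ internal positions of each hyperedge.
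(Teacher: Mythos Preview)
Your encoding is not injective: the gluing step across chunks fails, and no recovery procedure can repair it because two distinct hypergraphs of girth larger than $\ell$ can produce exactly the same tuple of graphs. Take $\ell=4$, $r=5$ (so $\lambda=2$, with anchor positions $\{1,3\}$ and $\{3,5\}$), and let
\[
H=\bigl\{\{1,2,5,8,9\},\{3,4,5,6,7\}\bigr\},\qquad H'=\bigl\{\{1,2,5,6,7\},\{3,4,5,8,9\}\bigr\}.
\]
Both are linear (the two hyperedges meet only in the vertex $5$), hence of infinite girth. A direct check shows that $G_1,\ldots,G_4,A_1,A_2$ are identical for $H$ and $H'$: for example $G_3=\{\{5,8\},\{5,6\}\}$ and $A_2=\{\{5,9\},\{5,7\}\}$ in both cases. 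Your per-chunk reconstruction correctly recovers the first halves $(1,2,5)$ and $(3,4,5)$ from $A_1$, and the second halves $(5,8,9)$ and $(5,6,7)$ from $A_2$, but both first halves end at $5$ and both second halves begin at $5$, so there is no way to tell which pairing is correct. The root cause is that consecutive cycles in your ``chain of cycles'' template share only a \emph{vertex}; linearity of $H$ gives no leverage at a shared vertex.

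The paper's construction replaces each hyperedge by a \emph{book}: $\lambda$ cycles of length at most $\ell$ all sharing a common \emph{edge} (the spine). The key lemma is that any cycle of length at most $\ell$ in $\partial^2 H$ lies inside a single hyperedge of $H$; since all pages of a book contain the spine, linearity then forces every page---and hence the entire book on $r$ vertices---into one hyperedge. Thus the hyperedges of $H$ are exactly the $r$-vertex books in the union of the image graphs, which yields injectivity. Your template can be salvaged by rerouting every anchor back to a fixed pair (say positions $1,2$) rather than to the previous anchor point; that is precisely what the book does.
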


We note that (\ref{eq:reduceGen}) corrects a bound\footnote{Theorem 20 of \cite{PTTW} claims a stronger upper bound for $\N_m^r(n,4)$ than what we prove in Theorem~\ref{thm:reduceGen}, but we have confirmed with the authors that there was a subtle error in their proof.} which appears in \cite{PTTW}. The inequality (\ref{eq:reduceGen}) is essentially tight when $\ell - 2$ divides $r - 2$, due to standard probabilistic arguments (see for instance Janson, \L uczak and Rucinski~\cite{JLR}): it is possible to show that
when $m \le n^{1 + 1/(\ell - 1)}$, the uniform model of random $n$-vertex $r$-graphs with $m$ edges has girth larger than $\ell$ with probability at least $a^{-m}$ for some constant $a > 1$ depending only on $\ell$ and $r$. In particular, there exists some constants $b,c > 1$ such that for $m\le n^{1 + 1/(\ell - 1)}$ we have
\begin{equation}\label{eq:randexp}
\N_m^r(n,\ell) \geq a^{-m} {{n \choose r} \choose m} \geq b^{-m} (n^r/m)^m \ge b^{-m} (n^2/m)^{(r-1+ \frac{r-2}{\ell-2})m}\ge  c^{-m}\cdot  \N_m^2(n,\ell)^{r-1+ \frac{r-2}{\ell-2}},
\end{equation}
where the third inequality used $m\le n^{1+1/(\ell-1)}$ and the last inequality used the trivial bound $\N_m^2(n,\ell) \leq (en^2/m)^m$.  This shows that the bound of Theorem~\ref{thm:reduceGen} is best possible when $\ell-2$ divides $r-2$ up to a multiplicative error of $c^{-m}$ for some constant $c>1$.   We believe that (\ref{eq:randexp}) should define the optimal exponent, and propose the following conjecture:

\begin{conjecture}\label{conj:bestexponent}
For all $r \geq 2$, $\ell \geq 3$ and $m,n \geq 1$,
\begin{equation*}
	\N_m^r(n,\ell) \le \N_m^2(n,\ell)^{r-1+ \frac{r-2}{\ell-2}}.
\end{equation*}
\end{conjecture}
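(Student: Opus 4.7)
The conjectured exponent $r-1+(r-2)/(\ell-2)$ coincides with that of Theorem~\ref{thm:reduceGen} precisely when $\ell-2\mid r-2$, so the task is to eliminate the ceiling in $\lambda$. Write $r-2=q(\ell-2)+s$ with $0\le s<\ell-2$; the case $s=0$ is Theorem~\ref{thm:reduceGen}, so assume $s\ge 1$, in which case the desired saving in the exponent is $1-s/(\ell-2)$. Note that the lower bound \eqref{eq:randexp} shows the conjecture to be sharp up to a factor of $c^{-m}$, so any prospective proof must be extremely tight -- no $\N_m^2(n,\ell)^{\Omega(1)}$ slack is permitted.

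My plan is a \emph{fractional} refinement of the extraction argument behind Theorem~\ref{thm:reduceGen}. That argument assigns to each $r$-edge of $H$ a rigid dissection into $r-1$ shadow pairs together with $\lceil(r-2)/(\ell-2)\rceil$ shadow path-pieces of length at most $\ell-1$. Instead, I would use a randomized per-edge dissection: assign $q$ full path-pieces of length $\ell-1$ deterministically, then with probability $s/(\ell-2)$ assign one additional short path-piece of length $s$, and otherwise assign nothing. In expectation the number of path-pieces per edge is exactly $(r-2)/(\ell-2)$, and the resulting randomized shadow graphs form a \emph{fractional cover} of the information content of $H$. Combined with the $r-1$ deterministic pair-shadows, an application of Shearer's (weighted) entropy inequality should give
\[
\log \N_m^r(n,\ell)\;\le\;\Bigl(r-1+\tfrac{r-2}{\ell-2}\Bigr)\log \N_m^2(n,\ell),
\]
which is the conjecture.

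The main obstacle is ensuring that, under the randomized per-edge dissection, every resulting shadow graph still has girth more than $\ell$ \emph{deterministically}. In the rigid construction of Theorem~\ref{thm:reduceGen}, the girth property is built into the geometry of the dissection; under a randomized scheme, two independently chosen short pieces from different $r$-edges can combine to form a new short Berge cycle in the shadow even though no such cycle exists in $H$. Overcoming this requires either a correlated choice of dissections across edges (so that short pieces are steered away from each other) or an alteration argument showing that with positive probability the randomized shadows remain girth-$>\ell$ while still covering the information of $H$ in the Shearer sense. As an alternative route one could attempt a tensor amplification, passing to a larger uniformity $r^\ast=(r-2)t+2$ where $\ell-2$ divides $r^\ast-2$ and then taking $t\to\infty$; but controlling both the vertex count and the girth of the amplified hypergraph runs into analogous difficulties, and the tightness imposed by \eqref{eq:randexp} means that neither route can afford a loss larger than exponential in $m$. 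Resolving this sharpness question is the heart of the conjecture.
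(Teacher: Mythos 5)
This statement is presented in the paper as an \emph{open conjecture}, not a theorem: the paper offers no proof, and explicitly describes the first open case as $\ell=4$, $r=3$. There is therefore no ``paper's own proof'' to compare your attempt against. What the paper does prove (Theorem~\ref{thm:reduceGen}) is the weaker bound with exponent $r-1+\lceil(r-2)/(\ell-2)\rceil$, and what you correctly observe is that this matches the conjectured exponent exactly when $\ell-2\mid r-2$. Your framing of the problem is thus accurate, and you are also right that \eqref{eq:randexp} shows the conjectured exponent cannot be lowered, so a proof would have to be essentially lossless.

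Your proposal, as you yourself acknowledge, is not a proof: you flag the central difficulty and do not resolve it. That difficulty is genuine and worth stating more sharply. The paper's argument is a clean injection $H\mapsto(\phi_1(H),\ldots,\phi_{r-1+\lambda}(H))$ into $\c{G}_{m,n}^{r-1+\lambda}$, and its correctness hinges on Lemma~\ref{lem:book}: from the union $\bigcup\phi_i(H)$ one can \emph{deterministically} recover $E(H)$ as the vertex sets of $r$-vertex books with pages of length at most $\ell$. Such a book must have at least $r-1+\lceil(r-2)/(\ell-2)\rceil$ edges (a page of length $p\le\ell$ can contribute at most $\ell-2$ of the $r-2$ non-spine vertices), so the ceiling is forced by the combinatorics of books and cannot be shaved within this injection scheme. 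Any Shearer/entropy refinement would need a fractional cover in which (i) every projection lands in $\c{G}_{m,n}$ \emph{with certainty}, not just in expectation, since Shearer's inequality requires the coordinate ranges to be fixed, and (ii) the information of $E(H)$ is still recoverable from the fractional family. Your randomized per-edge dissection fails (i) outright -- a shadow built from a random mixture of short pieces need not be $C_{[\ell]}$-free, and a single bad outcome already breaks the counting -- and the alteration/correlation fix you gesture at is precisely the missing idea. Absent that idea, there is no argument here, only a (reasonable) research direction, and the honest conclusion is that the conjecture remains open.
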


Theorem \ref{thm:reduceGen} shows that this conjecture is true when $\ell-2$ divides $r-2$, so the first open case of Conjecture \ref{conj:bestexponent} is when
$\ell = 4$ and $r = 3$. 

In the case that Berge $\ell$-cycles are forbidden instead of all Berge cycles of length at most $\ell$, we can prove an analog of Theorem~\ref{thm:reduceGen} with weaker quantitative bounds.  To this end, let $\N_{[m]}^r(n,\c{F})$ denote the number of $n$-vertex $\c{F}$-free $r$-graphs on at most $m$ hyperedges.

\begin{thm}\label{thm:reduceR3}
For each $\ell,r \geq 3$, there exists $c = c(\ell,r)$ such that
	\begin{equation*}
	\N_m^r(n,\C_{\ell}^r) \le 2^{c m}\cdot \N_{[m]}^2(n,C_{\ell})^{r!/2}.
	\end{equation*}
\end{thm}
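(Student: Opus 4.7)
The plan is to associate to each hypergraph $H$ counted by $\N_m^r(n, \c{C}_\ell^r)$ a tuple of $r!/2$ shadow graphs, each $C_\ell$-free with at most $m$ edges, and to bound the multiplicity of this association by $2^{cm}$; summing over such tuples then gives the claimed inequality.

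First I would fix a global ordering on $[n]$. For each hyperedge $e \in E(H)$, its vertices are sorted as $v_1^e < v_2^e < \cdots < v_r^e$. For each equivalence class of permutations $\pi \in S_r$ modulo reversal, of which there are $r!/2$, define
\[ G_\pi(H) = \bigl\{ \{v_{\pi(1)}^e, v_{\pi(2)}^e\} : e \in E(H) \bigr\}. \]
Then $|E(G_\pi(H))| \le m$. Moreover $G_\pi(H)$ is $C_\ell$-free: if it contained an $\ell$-cycle, the $\ell$ edges would come from $\ell$ distinct hyperedges of $H$ (since each hyperedge contributes at most one edge to $G_\pi$), and together with the $\ell$ cycle-vertices they would form a Berge $\ell$-cycle in $H$, contradicting the hypothesis. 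Hence the map $H \mapsto (G_\pi(H))_\pi$ lands in a set of size at most $\N_{[m]}^2(n, C_\ell)^{r!/2}$.

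Second, I would bound by $2^{cm}$ the number of hypergraphs $H$ compatible with a fixed tuple $(G_\pi)_\pi$. Observe that knowing the tuple is equivalent to knowing the $\binom{r}{2}$ positional pair-shadows $S_{ij}(H) = \{\{v_i^e, v_j^e\} : e \in E(H)\}$ for each $\{i,j\} \subseteq [r]$. Reconstructing $H$ then reduces to the combinatorial problem of consistently grouping the edges of the $S_{ij}$'s into $m$ sorted $r$-tuples, giving
\[ \N_m^r(n,\c{C}_\ell^r) \le 2^{cm} \cdot \N_{[m]}^2(n, C_\ell)^{r!/2}. \]

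The \emph{main obstacle} is to justify this $2^{cm}$ multiplicity bound. Since $H$ is not required to be linear, a single pair can appear as the $(i,j)$-shadow of many hyperedges at once, so a naive matching count across the $S_{ij}$'s is far too large. The plan is to control this via an encoding-type argument that charges only $O(1)$ bits per hyperedge (with implicit constant depending on $\ell$ and $r$) to specify how the pair-shadows assemble, exploiting the shared-vertex consistency constraints across different positions together with the Berge-$\ell$-cycle-free hypothesis on $H$, which together prune all but boundedly many assemblies per hyperedge. Summing over the $m$ hyperedges yields the required $2^{cm}$ factor, and combining with the first step completes the proof.
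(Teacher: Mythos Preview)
Your first step is correct: each positional pair-shadow $G_\pi(H)$ is $C_\ell$-free with at most $m$ edges, by exactly the Berge-cycle argument you give, so the tuple $(G_\pi)_\pi$ lies in a set of size at most $\N_{[m]}^2(n,C_\ell)^{r!/2}$. (That the $r!/2$ graphs are really determined by only $\binom{r}{2}$ of them is harmless for an upper bound.)

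The gap is entirely in the second step. You correctly flag the multiplicity bound as the main obstacle, but what you offer is a plan, not a proof: nothing in your sketch explains why the $\C_\ell^r$-free hypothesis forces the number of candidate $r$-sets---tuples $v_1<\cdots<v_r$ with $\{v_i,v_j\}\in S_{ij}$ for all $i<j$---to be $O_{\ell,r}(m)$. Since $H$ is not assumed linear, a single pair can lie in many hyperedges, and a priori the pair-shadows could support far more than $m$ consistent $r$-tuples; invoking ``shared-vertex consistency constraints'' and ``an encoding-type argument'' does not by itself rule this out. Without this bound you cannot conclude that at most $2^{cm}$ hypergraphs $H$ share a given shadow tuple.

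The paper fills this gap with two ingredients you are missing. First, it does \emph{not} jump directly from $r$-graphs to graphs: it reduces inductively from $r$ to $r-1$, showing $\N_{[m]}^r(n,\C_\ell^r)\le 2^{c_r m}\,\N_{[m]}^{r-1}(n,\C_\ell^{r-1})^{r}$, and then iterates down to $r=2$; the exponent $r(r-1)\cdots 3=r!/2$ arises from this iteration. Second---and this is the combinatorial heart---at each step the only $r$-sets that could be edges of $H$ are those inducing a $K_r^{r-1}$ in $\partial^{r-1}H$, and the paper proves (Lemma~3.2) that every such $r$-set is either an edge of $H$ or a \emph{core set}, and that a $\C_\ell^r$-free $r$-graph with $m$ edges has at most $\ell^2 r^2 m$ core sets. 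The proof of this lemma is a concrete extension argument: if too many core sets survive an iterated peeling process, one can thread a Berge $\ell$-cycle through them. This supplies exactly the $O(m)$ bound on candidates that your encoding idea needs but does not establish.
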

We suspect that this result continues to hold with $\N_{[m]}^2(n,C_{\ell})$ replaced by $\N_{m}^2(n,C_{\ell})$.

\medskip

\begin{center}
{\sc 1.2 Subgraphs of random $r$-graphs of large girth.}
\end{center}

Denote by $H_{n,p}^r$ the $r$-graph obtained by including each hyperedge of $K_n^r$ independently and with probability $p$. Given a family of $r$-graphs $\c{F}$, let $\ex(H_{n,p}^r,\c{F})$ denote the size of a largest $\c{F}$-free subgraph of $H_{n,p}^r$.  Recall that a statement depending on $n$ holds \textit{asymptotically almost surely} or a.a.s.\ if it holds with probability tending to 1 as $n \rightarrow \infty$. A hypergraph of girth at least three is a linear hypergraph,
and it is not hard to show by a simple first moment calculation that if $p \geq n^{-r}\log n$, then a.a.s
\begin{equation*}
 \ex(H_{n,p}^r,\C_{[2]}^r) = \Theta(\min\{pn^r,n^2\}).
 \end{equation*}
 Our first result
essentially determines the a.a.s behavior of the number of edges in an extremal subgraph of $H_{n,p}^r$ of girth four.  In this theorem we omit the case $p < n^{-r + \f{3}{2}}$, as it is straightforward to show that a.a.s $\ex(H_{n,p}^r,\C_{[3]}^r) = \Theta(pn^r)$ when $p \geq n^{-r}\log n$ in this range.

\begin{thm}\label{thm:randTri}
Let $r \geq 3$. If $p\ge n^{-r+\f{3}{2}}(\log n)^{2r-3}$, then a.a.s.:
	\[p^{\f{1}{{2r-3}}}n^{2 - o(1)} \le \ex(H_{n,p}^r,\C_{[3]}^r)\le p^{\f{1}{{2r-3}}}n^{2 + o(1)}.\]
\end{thm}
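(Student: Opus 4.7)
The plan is to prove the two inequalities by separate first-moment arguments, with the upper bound being a direct corollary of Theorem~\ref{thm:reduceGen}.

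\emph{Upper bound.} For $\ell = 3$ we have $\lambda = r - 2$, so Theorem~\ref{thm:reduceGen} gives $\N_M^r(n, 3) \le \N_M^2(n, 3)^{2r - 3}$. Since $k = 1$ makes Theorem~\ref{thm:MS} vacuous at $\ell = 3$, I would use the trivial bound $\N_M^2(n, 3) \le (en^2/M)^M$ on the number of $M$-edge triangle-free graphs. A union bound over $M$-edge subhypergraphs of $H_{n,p}^r$ then yields
\[
\Pr\bigl[\ex(H_{n,p}^r, \C_{[3]}^r) \ge M\bigr] \;\le\; p^M\, \N_M^r(n,3) \;\le\; \l[ p \l(\f{en^2}{M}\r)^{2r-3}\r]^M,
\]
which is $o(1)$ once $M \ge C p^{1/(2r-3)} n^2$ for any constant $C > e$. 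This establishes the claimed upper bound.

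\emph{Lower bound.} The goal is to find, a.a.s., a $\C_{[3]}^r$-free subhypergraph of $H_{n,p}^r$ with at least $p^{1/(2r-3)} n^{2-o(1)}$ hyperedges. Two complementary constructions handle the extreme regimes. $(i)$ For $p$ near the threshold $n^{-r+3/2}(\log n)^{2r-3}$, apply the deletion method to $H_{n,q}^r \sub H_{n,p}^r$ with $q$ slightly below $n^{-r+3/2}$; at this density the expected numbers of Berge $2$- and Berge $3$-cycles, of orders $q^2 n^{2r-2}$ and $q^3 n^{3r-3}$, are both $o(qn^r)$, so deleting one hyperedge per cycle leaves a $\C_{[3]}^r$-free subhypergraph of size $n^{3/2-o(1)}$. $(ii)$ For $p$ near $1$, intersecting $H_{n,p}^r$ with a Ruzsa--Szemer\'edi/Behrend-type extremal $\C_{[3]}^r$-free $r$-graph $G^* \sub K_n^r$ with $n^{2-o(1)}$ hyperedges (cf.~\cite{EFR,RS}) yields a $\C_{[3]}^r$-free subhypergraph of size $\sim p\,|G^*| = pn^{2-o(1)}$.

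\emph{Main obstacle.} In the intermediate regime $n^{-r+3/2} \ll p \ll 1$, the maximum $\max\{n^{3/2}, pn^{2-o(1)}\}$ of the two constructions above falls short of the target $p^{1/(2r-3)} n^{2-o(1)}$. To bridge this gap my plan is to establish a matching counting lower bound $\N_M^r(n, 3) \ge c^{-M}(n^2/M)^{(2r-3)M}$ for all $M$ up to $p^{1/(2r-3)} n^{2-o(1)}$, extending (\ref{eq:randexp}) beyond its stated range $M \le n^{3/2}$. Combined with a Janson-type second-moment argument applied to the number of $M$-edge $\C_{[3]}^r$-free subhypergraphs of $H_{n,p}^r$ (whose expectation would then be $\N_M^r(n, 3) p^M \to \infty$), this would yield the desired lower bound a.a.s. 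Constructing enough distinct $\C_{[3]}^r$-free hypergraphs to attain this counting bound is the principal technical hurdle; I would approach it by first fixing a random triangle-free graph on $[n]$ and then independently extending each of its edges to an $r$-hyperedge by a randomly chosen $(r-2)$-set, and carefully count inequivalent extensions.
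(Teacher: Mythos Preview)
Your upper bound is exactly the paper's argument: combine Theorem~\ref{thm:reduceGen} with the trivial bound $\N_m^2(n,3)\le (en^2/m)^m$ and Markov's inequality. Nothing more is needed there.

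The lower bound, however, has a genuine gap in the intermediate regime. Your plan is to (a) extend the counting bound $\N_M^r(n,3)\ge c^{-M}(n^2/M)^{(2r-3)M}$ beyond $M\le n^{3/2}$, and then (b) run a Janson-type second-moment argument on the number of $M$-edge $\C_{[3]}^r$-free subhypergraphs of $H_{n,p}^r$. Step (b) is precisely what the paper lists as an open problem in its concluding remarks (Problem~III): showing that $p^m\N_m^r(n,\ell)\to\infty$ forces $H_{n,p}^r$ to contain such a subgraph a.a.s. The correlations between $M$-edge $\C_{[3]}^r$-free subhypergraphs sharing many edges are severe, and there is no known way to control the variance. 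Step (a) is also nontrivial: extending each edge of a triangle-free graph by a random $(r-2)$-set need not produce a \emph{linear} $r$-graph, let alone a $\C_{[3]}^r$-free one, so counting inequivalent extensions does not directly give the bound you want.

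The paper avoids both issues by a constructive random-homomorphism argument. Fix an extremal $\C_{[3]}^r$-free $r$-graph $J$ on $t$ vertices with $t^2 e^{-c\sqrt{\log t}}$ edges (Ruzsa--Szemer\'edi/Erd\H{o}s--Frankl--R\"odl), choose $\chi:V(H_{n,p}^r)\to V(J)$ uniformly at random, and keep the hyperedges $e$ with $\chi(e)\in E(J)$. The key observation is that any Berge $2$- or $3$-cycle surviving in the image must collapse onto a single edge of $J$; one then deletes an edge from each such collapsed cycle. Taking $t=p^{2/(2r-3)}n(\log n)^{-1}$ makes the main term $t^{2-r}\cdot pn^r = p^{1/(2r-3)}n^2$ dominate all the deletion terms $t^{2-v}R_{\ell,v}(H_{n,p}^r)$ a.a.s. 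This single choice of $t$ handles the entire range of $p$ at once and sidesteps the second-moment obstacle entirely.
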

Due to Theorems \ref{thm:reduceGen} and \ref{thm:randTri}, the number of linear triangle-free $r$-graphs with $n$ vertices and $m$ edges 
where $n^{3/2 + o(1)} \leq m \leq \ex(n,\C_{[3]}^r)=o(n^2)$ and $r\ge 3$ is:
\begin{equation*}
\N_m^r(n,3) = \N_m^2(n,3)^{2r-3 + o(1)} = \Bigl(\frac{n^2}{m}\Bigr)^{(2r - 3)m + o(m)} .
\end{equation*}
The authors and Nie~\cite{NSV} obtained bounds for $r$-uniform loose triangles\footnote{The loose triangle is the Berge triangle whose edges pairwise intersect in exactly one vertex.}, where for $r = 3$ the same essentially tight bounds as in Theorem~\ref{thm:randTri} were obtained, but for $r>3$ there remains a significant gap. In the case of subgraphs of girth larger than four,
Theorem~\ref{thm:reduceGen} allows us to generalize results of Morris and Saxton~\cite{MS} and earlier results of Kohayakawa, Kreuter and Steger~\cite{KKS} giving subgraphs of large girth in random graphs in the following way:

\begin{thm}\label{thm:randGirth}
	Let $\ell \geq 4$ and $r\ge 2$, and let $k=\floor{\ell/2}$ and $\lambda =\ceil{(r-2)/(\ell-2)}$.  Then a.a.s.:
	\[\ex(H_{n,p}^r,\C_{[\ell]}^r)\le \begin{cases}
	n^{1+\rec{2k-1}+o(1)} & n^{-r+1+\rec{\ell-1}}\le p <  n^{\f{-(r-1+\lambda)(k-1)}{2k-1}}(\log n)^{(r-1+\lambda)k},\\
	p^{\f{1}{{(r-1+\lambda)k}}}n^{1+\rec{k}+o(1)} & n^{\f{-(r-1+\lambda)(\ell-1-k)}{\ell-1}}(\log n)^{(r-1+\lambda)k}\le p \le 1.
	\end{cases}\]

		If Conjecture \ref{conj:girth} is true, then
	\[\ex(H_{n,p}^r,\C_{[\ell]}^r)\ge \begin{cases}
	n^{1+\rec{\ell-1} + o(1)} & n^{-r+1+\rec{\ell-1}}\le p < n^{\f{-(r-1)(\ell -1-k)}{\ell-1}},\\
	p^{\frac{1}{(r - 1)k}}n^{1 + \frac{1}{k} - o(1)} & n^{\f{-(r-1)(\ell -1-k)}{\ell-1}}\le p\le 1.
	\end{cases} \]
\end{thm}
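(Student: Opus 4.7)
The plan is to prove the upper and lower bounds separately, each splitting by the size of $p$.

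For the \textbf{upper bound}, I would combine Theorem~\ref{thm:reduceGen} with the graph bound of Theorem~\ref{thm:MS} in a first-moment union bound. Writing $R:=r-1+\lam$,
\[
\P\bigl[\ex(H_{n,p}^r,\C_{[\ell]}^r)\ge m\bigr]\le \sum_{m'\ge m}\N_{m'}^r(n,\ell)\,p^{m'}\le \sum_{m'\ge m}\N_{m'}^2(n,\ell)^R\,p^{m'},
\]
and for $m'\ge n^{1+1/(2k-1)}(\log n)^2$ Theorem~\ref{thm:MS} bounds each summand by $\bigl[e^{cR}(\log n)^{(k-1)R}(n^{1+1/k}/m')^{kR}p\bigr]^{m'}$. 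Taking $m$ of order $p^{1/(Rk)}n^{1+1/k}(\log n)^{(k-1)/k}$ with a large multiplicative constant makes the base at most $1/2$ for all $m'\ge m$, so the tail is $o(1)$; this gives the Case~2 bound. The Case~1 bound then follows from stochastic monotonicity of $\ex$ in $p$: evaluating Case~2 at the boundary $p^\star:=n^{-R(k-1)/(2k-1)}(\log n)^{Rk}$ yields $n^{1+1/(2k-1)+o(1)}$, which matches the claimed $n^{1+1/(\ell-1)+o(1)}$ when $\ell$ is even; a short extra argument is required when $\ell$ is odd to sharpen the exponent from $1/(2k-1)$ to $1/(\ell-1)=1/(2k)$.

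For the \textbf{lower bound} (assuming Conjecture~\ref{conj:girth}), Case~1 follows from the deletion method applied to a sub-sampled copy $H_{n,q}^r\subseteq H_{n,p}^r$ with $q:=n^{-r+1+1/(\ell-1)-\ep}$. By Chernoff, $|E(H_{n,q}^r)|=\Theta(qn^r)=n^{1+1/(\ell-1)-\ep+o(1)}$ a.a.s., while the expected number of Berge $i$-cycles for each $2\le i\le\ell$ is $O(n^{i(r-1)}q^i)=O(n^{i/(\ell-1)-i\ep})=o(qn^r)$. Removing one edge per short Berge cycle leaves a $\C_{[\ell]}^r$-free subgraph of $H_{n,p}^r$ of size $(1-o(1))qn^r$; sending $\ep\to 0$ yields the bound.

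For Case~2 of the lower bound, a direct intersection of a Conjecture~\ref{conj:girth}-type extremal $\C_{[\ell]}^r$-free graph with $H_{n,p}^r$ only yields $pn^{1+1/k-o(1)}$ edges, falling short of the target $p^{1/((r-1)k)}n^{1+1/k-o(1)}$ whenever $p<1$. To attain the sharper $p$-exponent $1/((r-1)k)$, I would adapt the random-graph lower-bound strategy of Morris-Saxton~\cite{MS} and Kohayakawa-Kreuter-Steger~\cite{KKS} to the hypergraph setting, applying Conjecture~\ref{conj:girth} on a carefully chosen random vertex subset $V'\subseteq[n]$ together with an alteration step inside $H_{n,p}^r[V']$ that absorbs the Berge cycles created upon intersection. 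I expect this Case~2 lower bound to be the main obstacle of the proof: the size of $V'$, the local density of $H_{n,p}^r[V']$, and the deletion argument must be balanced delicately in order to recover the sharp $p$-exponent $1/((r-1)k)$ rather than the naive exponent $1$.
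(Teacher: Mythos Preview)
Your upper-bound argument matches the paper's exactly: combine Theorem~\ref{thm:reduceGen} with Theorem~\ref{thm:MS} inside a first-moment bound at a well-chosen $m$, then invoke monotonicity of $\ex(H_{n,p}^r,\C_{[\ell]}^r)$ in $p$ below the threshold $p_0$. Your Case~1 lower bound via deletion on a sub-sampled $H_{n,q}^r \subseteq H_{n,p}^r$ is also fine and standard.

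The genuine gap is Case~2 of the lower bound, which you correctly flag as the main obstacle but do not actually solve. Restricting to a random vertex subset $V'$ is circular, since $H_{n,p}^r[V']$ has the same distribution as $H_{|V'|,p}^r$, and you have already noted that intersecting with a fixed extremal hypergraph loses a full factor of $p$. The paper does something different and much cleaner, following Foucaud--Krivelevich--Perarnau and Perarnau--Reed rather than Morris--Saxton or KKS: take an extremal $\C_{[\ell]}^r$-free $r$-graph $J$ on $t = \Theta(p^{1/(r-1)} n)$ vertices (Conjecture~\ref{conj:girth} supplies $e(J) = t^{1+1/k - o(1)}$) and a uniformly random map $\chi : [n] \to V(J)$. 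Keep each $e \in E(H_{n,p}^r)$ with $\chi(e) \in E(J)$ and $\chi(e) \neq \chi(f)$ for every $f \in E(H_{n,p}^r)$ intersecting $e$. A short lemma shows that any such local isomorphism sends a Berge cycle of length $\le \ell$ onto a subgraph of girth $\le \ell$, so the surviving subgraph $H'$ is automatically $\C_{[\ell]}^r$-free. One gets $\E[e(H')] \ge \ex(t,\C_{[\ell]}^r)\, t^{-r}\, e(H_{n,p}^r)$, and the choice $t = \Theta(p^{1/(r-1)} n)$ is exactly what makes the local-injectivity condition hold with probability $\Omega(1)$ given the Chernoff-controlled $i$-degrees $\Delta_i = \Theta(p n^{r-i})$ of $H_{n,p}^r$. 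Plugging in yields $p^{1/((r-1)k)} n^{1+1/k - o(1)}$ directly, with no alteration step and no delicate balancing.
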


  We note that the journal version of this article incorrectly states our first upper bound as $n^{1+\frac{1}{\ell-1}+o(1)}$ which is only true for $\ell$ even.  We  emphasize that there is a significant gap in the bounds of Theorem~\ref{thm:randGirth} due to the presence of $\lam$ in the exponent of $p$ in the upper bound and its absence in the lower bound, and this gap is closed by Theorem \ref{thm:randTri} when $\ell = 3$ by an improvement to the lower bound.
A similar phenomenon appears in recent work of Mubayi and Yepremyan~\cite{MY}, who determined the a.a.s value of the extremal function for loose even cycles in $H_{n,p}^r$ for all but a small range of $p$. It seems likely that the following conjecture is true:

\begin{conjecture}\label{conj:rand}
Let $\ell,r \geq 3$ and $k = \lfloor \ell/2 \rfloor$. Then there exists $\gamma = \gamma(\ell,r)$ such that a.a.s.:
\[ \ex(H_{n,p}^r,\C_{[\ell]}^r) = \left\{\begin{array}{ll}
n^{1 + \frac{1}{\ell - 1} + o(1)} & n^{-r + 1 + \frac{1}{\ell - 1}} \leq p < n^{-\frac{\gamma(\ell - 1 - k)}{\ell - 1}}, \\
p^{\frac{1}{\gamma k}} n^{1 + \frac{1}{k} + o(1)} &  n^{-\frac{\gamma(\ell - 1 - k)}{\ell - 1}} \leq p \leq 1.
\end{array}\right.\]
\end{conjecture}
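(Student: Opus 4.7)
The plan is to identify the correct exponent $\gamma=\gamma(\ell,r)$ and then run matching upper and lower bounds in the style of Theorem~\ref{thm:randGirth}, but with a counting input that is tight in the exponent rather than off by the ceiling $\lam$. The natural guess, forced by the random-model heuristic used in (\ref{eq:randexp}) and by Conjecture~\ref{conj:bestexponent}, is
\[\gamma = r-1+\f{r-2}{\ell-2},\]
which agrees with $r-1+\lam$ exactly when $\ell-2$ divides $r-2$, so in that divisibility case Theorem~\ref{thm:reduceGen} already supplies the optimal counting bound and the conjecture reduces to executing the random-graph step.

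For the upper bound, I would repeat the union-bound argument used to prove the upper half of Theorem~\ref{thm:randGirth}: for each admissible edge count $m$, enumerate the candidate $\c{C}_{[\ell]}^r$-free subhypergraphs of $K_n^r$ using the counting estimate $\N_m^r(n,\ell)\le \N_m^2(n,\ell)^{\gamma}$, insert Theorem~\ref{thm:MS} to control $\N_m^2(n,\ell)$, and bound by $p^m$ the probability that any fixed such hypergraph sits inside $H_{n,p}^r$. Optimizing $m$ against $p$ produces the crossover at $p=n^{-\gamma(\ell-1-k)/(\ell-1)}$ and the two stated asymptotics; this is a mechanical repetition of the calculation in Theorem~\ref{thm:randGirth} with $r-1+\lam$ replaced by $\gamma$. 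The only new ingredient required is the sharpened counting bound, i.e.\ Conjecture~\ref{conj:bestexponent}; in the absence of a proof one could try to establish the weaker statement $\N_m^r(n,\ell)\le 2^{o(m)}\N_m^2(n,\ell)^{\gamma}$ tailored to the range of $m$ relevant here, for instance by refining the container/projection argument behind Theorem~\ref{thm:reduceGen} so that the final projection step costs $(r-2)/(\ell-2)$ copies on average rather than $\lam$ in the worst case.

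For the lower bound, assuming Conjecture~\ref{conj:girth}, take a near-extremal $n$-vertex $r$-graph $G$ of girth greater than $\ell$ with roughly $n^{1+1/k-o(1)}$ edges, intersect $G$ with $H_{n,p}^r$, and run an alteration argument: bound the expected number of short Berge cycles surviving in $G\cap H_{n,p}^r$ by first-moment, remove one edge from each, and check that the remaining edge count matches the two claimed regimes. The transition at $p=n^{-\gamma(\ell-1-k)/(\ell-1)}$ arises as the point where the cost of deletions catches up to the sparse-regime contribution $n^{1+1/(\ell-1)+o(1)}$; producing the prefactor $p^{1/(\gamma k)}$ in the dense regime forces the same $\gamma$ that appears in the upper bound, which is how the two halves of the conjecture end up with a common exponent.

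The main obstacles are essentially the two conjectures already flagged in the paper. First, Conjecture~\ref{conj:girth} is open for all $\ell\ge 5$ and $r\ge 3$, so the lower-bound construction currently has nothing to start from in that range; even producing an $n^{1+1/k-o(1)}$ construction of large girth in the hypergraph setting is a major open problem. Second, Conjecture~\ref{conj:bestexponent} is needed to remove the $\lam$-versus-$(r-2)/(\ell-2)$ discrepancy in the upper bound, and its first open case ($\ell=4$, $r=3$) is flagged by the authors as already difficult: the rounding loss in $\lam=\ceil{(r-2)/(\ell-2)}$ reflects a genuine feature of the iterated projection used to prove Theorem~\ref{thm:reduceGen}, and avoiding it seems to require a genuinely new counting technique rather than a refinement of the present one. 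Any proof of Conjecture~\ref{conj:rand} will, I expect, have to confront both of these bottlenecks simultaneously.
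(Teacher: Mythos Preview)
This statement is a conjecture, not a theorem; the paper does not prove it and explicitly presents it as open. Your proposal correctly recognizes this and aligns with the paper's own discussion: the upper-bound side would follow from a repetition of the Theorem~\ref{thm:randGirth} argument if Conjecture~\ref{conj:bestexponent} were known, and the lower-bound side requires at minimum Conjecture~\ref{conj:girth}. The tentative value $\gamma=r-1+(r-2)/(\ell-2)$ you single out is exactly what the paper suggests, and the paper notes it is confirmed only for $\ell=3$ via Theorem~\ref{thm:randTri}.

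There is, however, a genuine gap in your lower-bound sketch. Intersecting a \emph{fixed} extremal $r$-graph $G$ of girth larger than $\ell$ with $H_{n,p}^r$ produces a subgraph of $G$, which therefore already has girth larger than $\ell$; there are no short Berge cycles to delete, so the alteration step is vacuous, and the surviving edge count is only about $p\,n^{1+1/k-o(1)}$. This is far short of the conjectured $p^{1/(\gamma k)}n^{1+1/k}$, and indeed weaker than the $p^{1/((r-1)k)}n^{1+1/k-o(1)}$ the paper actually proves in Theorem~\ref{thm:randGirth}. The paper's lower bound uses a different mechanism: a random local-isomorphism argument (Lemmas~\ref{lem:homB} and~\ref{lem:homGen}) that projects $H_{n,p}^r$ onto a small extremal $\c{C}_{[\ell]}^r$-free hypergraph $J$ on $t\approx p^{1/(r-1)}n$ vertices and retains those hyperedges that map to edges of $J$ with locally distinct images. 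Even this stronger method does not reach the conjectured exponent for $\ell\ge 4$, which is precisely why Conjecture~\ref{conj:rand} remains open.
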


Conjecture \ref{conj:bestexponent} suggests the possible value $\gamma(\ell,r) = r - 1 + (r - 2)/(\ell - 2)$, which is the correct value for $\ell = 3$ by Theorem 
\ref{thm:randTri}. We are not certain that this is the right value of $\gamma$ in general, even when $r = 3$ and $\ell = 4$, and more generally, 
Conjecture \ref{conj:girth} is an obstacle for $r \geq 3$ and $\ell \geq 5$. Theorem \ref{thm:randGirth}
shows that if $\gamma$ exists, then $(r - 1)k \leq \gamma \leq (r - 1 + \lambda)k$ provided Conjecture \ref{conj:girth} holds. 

\medskip

Letting $f(n,p) = \ex(H_{n,p}^3,\C_{[4]}^3)$, we plot the bounds of Theorem \ref{thm:randGirth} in Figure~\ref{fig:c4}, where the upper bound is in blue and the lower bound is in green. The truth of Conjecture \ref{conj:bestexponent} for $\ell = 4$ would imply the slightly better upper bound $f(n,p) \leq p^{1/5}n^{3/2 + o(1)}$.

\medskip

\begin{figure}[!ht]
	\begin{center}
		\includegraphics[width=4.5in]{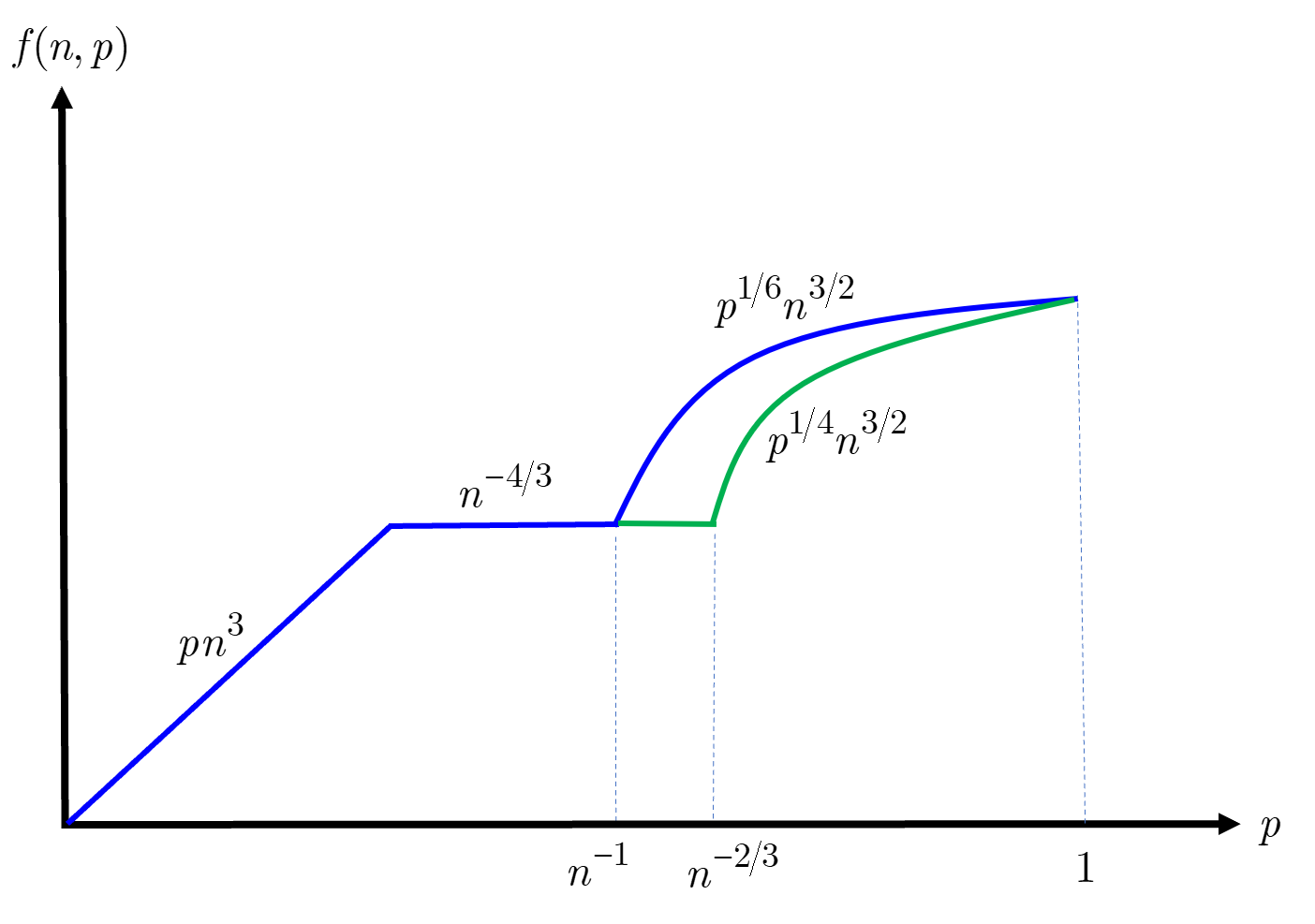}
		\caption{Subgraphs of $H_{n,p}^3$ of girth five}
		\label{fig:c4}
	\end{center}
\end{figure}

\medskip

\medskip

\medskip

\textbf{Notation}.  A set of size $k$ will be called a \textit{$k$-set}.  As much as possible, when working with a $k$-graph $G$ and an $r$-graph $H$ with $k<r$, we will refer to elements of $E(G)$ as edges and elements of $E(H)$ as hyperedges.  Given a hypergraph $H$ on $[n]$, we define the \textit{$k$-shadow} $\pa^k H$ to be the $k$-graph on $[n]$ consisting of all $k$-sets $e$ which lie in a hyperedge of $E(H)$.
If $G_1,\ldots,G_q$ are $k$-graphs on $[n]$, then $\bigcup G_i$ denotes the $k$-graph $G$ on $[n]$ which has edge set $\bigcup E(G_i)$.  

\section{Proof of Theorem~\ref{thm:reduceGen}}\label{sec:triangles}
As Balogh and Li~\cite{BL} observed, if $\ell \geq 3$ and $H$ has girth larger than $\ell$, then $H$ is uniquely determined by $\pa^2 H$, which we can view as the graph obtained by replacing each hyperedge of $H$ by a clique.  A key insight in proving Theorem~\ref{thm:reduceGen} is that we can replace each hyperedge of $H$ with a sparser graph $B$ and still uniquely recover $H$ from this graph.  To this end, we say that a graph $B$ is a \textit{book} if there exist cycles $F_1,\ldots,F_k$ and an edge $xy$ such that $B=\bigcup F_i$ and $E(F_i)\cap E(F_j)=\{xy\}$ for all $i\ne j$.  In this case we call the cycles $F_i$ the \textit{pages} of $B$ and we call the common edge $xy$ the \textit{spine} of $B$. The following lemma shows that if we replace each hyperedge in $H$ by a book on $r$ vertices which has small pages, then the vertex sets of books in the resulting graph are exactly the hyperedges of $H$.

\begin{lem}\label{lem:book}
	Let $H$ be an $r$-graph of girth larger than $\ell$.  If $\pa^2 H$ contains a book $B$ on $r$ vertices such that every page has length at most $\ell$, then there exists a hyperedge $e\in E(H)$ such that $V(B)= e$.
\end{lem}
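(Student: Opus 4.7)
The plan is to first establish a key intermediate claim: if $F$ is a cycle in $\pa^2 H$ of length $t$ with $3 \le t \le \ell$, then there exists a hyperedge $e \in E(H)$ with $V(F) \subseteq e$.

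Granting this claim, the lemma follows quickly. Each page $F_i$ of the book $B$ is a cycle in $\pa^2 H$ of length at most $\ell$, so the claim supplies a hyperedge $e_i \in E(H)$ with $V(F_i) \subseteq e_i$. All $e_i$ contain the spine $\{x,y\}$, but since girth larger than $\ell \ge 3$ forces $H$ to be linear, the hyperedge of $H$ containing the pair $\{x,y\}$ is unique; hence all $e_i$ equal a common hyperedge $e$. Then $V(B) = \bigcup_i V(F_i) \subseteq e$, and since $|V(B)| = r = |e|$, equality holds.

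To prove the claim I use linearity to observe that every edge of $\pa^2 H$ lies in a unique hyperedge of $H$. Writing $F = v_1 v_2 \cdots v_t v_1$, let $h_j$ denote the unique hyperedge containing $v_j v_{j+1}$. If the hyperedges $h_1, \dots, h_t$ were pairwise distinct, then together with $v_1, \dots, v_t$ they would form a Berge $t$-cycle, contradicting girth larger than $\ell \ge t$. Hence $h_j = h_{j'}$ for some $j \ne j'$, and this common hyperedge contains $\{v_j, v_{j+1}, v_{j'}, v_{j'+1}\}$.

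I then induct on $t$. In the base case $t = 3$, the equality $h_j = h_{j'}$ already places all three vertices of $F$ into $h_j$. For the inductive step $t \ge 4$, the four vertices above yield a chord of $F$ in $\pa^2 H$ which splits $F$ into two strictly shorter cycles $F_A, F_B$, each of length in $[3, t-1]$ provided the chord is chosen appropriately from the non-edge pairs in $\{v_j, v_{j+1}, v_{j'}, v_{j'+1}\}$. By induction, $V(F_A) \subseteq h_A$ and $V(F_B) \subseteq h_B$ for some hyperedges $h_A, h_B \in E(H)$; since $h_A$ and $h_B$ both contain the chord edge, linearity forces $h_A = h_B$, and this common hyperedge covers $V(F_A) \cup V(F_B) = V(F)$.

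The main obstacle will be handling the chord-splitting when the cyclic distance $|j'-j|$ is small (most delicately $|j' - j| = 2$), where one must take the chord $v_j v_{j'}$ rather than $v_{j+1} v_{j'}$ to avoid creating a degenerate ``$2$-cycle.'' Once a short case analysis confirms that both sub-cycles fall in $[3, t-1]$ for every value of $|j'-j|$, the induction goes through cleanly.
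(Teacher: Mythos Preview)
Your proposal is correct, and its overall architecture matches the paper's: first prove the claim that any cycle of length $t\le\ell$ in $\pa^2 H$ has its vertex set contained in a single hyperedge of $H$, then apply this to each page and use linearity on the spine to conclude all pages sit in one common hyperedge.

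Where you genuinely diverge is in the proof of the claim. The paper does not induct on $t$. Instead it picks hyperedges $e_1,\dots,e_t$ with $v_i,v_{i+1}\in e_i$, groups them into maximal runs of equal consecutive hyperedges, and argues that the distinct ``block representatives'' $e_{i_1},\dots,e_{i_q}$ would themselves form a Berge $q$-cycle (or contain a shorter one if two block representatives coincide), contradicting girth $>\ell$. This is a one-shot contradiction with no recursion. Your argument instead finds a repeated hyperedge $h_j=h_{j'}$, extracts a chord of $F$ inside $\pa^2 H$, splits $F$ into two strictly shorter cycles, applies the inductive hypothesis to each, and then uses linearity on the chord to merge the two resulting hyperedges. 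Your route leans on linearity twice (once to make $h_j$ well-defined, once to glue $h_A=h_B$), whereas the paper only invokes linearity at the very end for the spine; but since girth $>\ell\ge 3$ already gives linearity, this costs nothing. The case analysis you flag (choosing the chord $v_jv_{j'}$ rather than $v_{j+1}v_{j'}$ when the cyclic distance is $1$ or $2$) is real but routine: for cyclic distance $1$ the chord $v_jv_{j+2}$ gives a triangle and a $(t-1)$-cycle, and for distance $d$ with $2\le d\le t-2$ the chord $v_jv_{j'}$ gives cycles of lengths $d+1$ and $t-d+1$, both in $[3,t-1]$. Either approach is clean; yours is perhaps more transparent, the paper's slightly shorter.
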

\begin{proof}
	Let $F$ be a cycle in $\pa^2 H$ with $V(F)=\{v_1,\ldots,v_p\}$ such that $v_iv_{i+1}\in E(\pa^2 H)$ for $i<p$ and $v_1v_p\in E(\pa^2 H)$.  If $p\le \ell$ we claim that there exists an $e\in E(H)$ such that $V(F)\sub e$.  Indeed, by definition of $\pa^2 H$ there exists some hyperedge $e_i\in E(H)$ with $v_i,v_{i+1}\in e_i$ for all $i<p$ and some hyperedge $e_p$ with $v_1,v_p\in e_p$.  If all of these $e_i$ hyperedges are equal then we are done, so we may assume $e_1\ne e_{p}$.  Define $i_1$ to be the largest index such that $e_i=e_1$ for all $i\le i_1$, define $i_2$ to be the largest index so that $e_i=e_{i_1+1}$ for all $i_1<i\le i_2$, and so on up to $i_q=p$, and note that $2\le q\le p$ since $e_1\ne e_{p}$.  If all the $e_{i_j}$ hyperedges are distinct, then they form a Berge $q$-cycle in $H$ since $v_{1+i_j}\in e_{i_j}\cap e_{1+i_{j}}= e_{i_j}\cap e_{i_{j+1}}$ for all $j$, a contradiction.  Thus we can assume $e_{i_j}=e_{i_{j'}}$ for some $j<j'$.  We can further assume that $e_{i_s}\ne e_{i_{s'}}$ for any $j\le s<s'<j'$, as otherwise we could replace $j,j'$ with $s,s'$.  Finally note that $j<j'-1$, as otherwise we would have $e_{i_j}=e_{i_{j'}}=e_{i_j+1}$, contradicting the maximality of $i_j$.  We conclude that the distinct hyperedges $e_{i_j},e_{i_{j+1}},\ldots,e_{i_{j'-1}}$ form a Berge $(j' - j)$-cycle with $2\le j'-j\le \ell$ in $H$, a contradiction. This proves the claim.
	
	Now let $B$ be a book with spine $xy$ and pages $F_1,\ldots,F_k$ of length at most $\ell$.  By the claim there exist  hyperedges $e_1,\ldots,e_k\in E(H)$ such that $V(F_i)\sub e_i$ for all $i$, and in particular $x,y\in e_i$ for all $i$. Because $H$ is linear, this implies that all of these hyperedges are equal and we have $V(B)\sub e_1$.  If $B$ has $r$ vertices, then we further have $V(B)=e_1$.
\end{proof}

We now complete the proof of Theorem \ref{thm:reduceGen}. With $\lam:=\ceil{(r-2)/(\ell-2)}$ we observe for all $\ell,r\ge 3$ that there exists a book graph $B$ on $r$ vertices $\{x_1,\ldots,x_r\}$ with $r-1+\lam$ edges $f_1,\ldots,f_{r-1+\lam}$.  Indeed if $\ell-2$ divides $r-2$ one can take $\lam$ copies of $C_\ell$ which share a common edge, and otherwise one can take $\lam-1$ copies of $C_\ell$ and a copy of $C_p$ with $p=r-(\lambda-1)(\ell-2)\ge 3$.  From now on we let $B$ denote this book graph.  If $f_i=\{x_j,x_{j'}\}\in E(B)$ and $e=\{v_1,\ldots,v_r\}\sub [n]$ is any $r$-set with $v_1<\cdots<v_r$, define $\phi_{i}(e)=\{v_j,v_{j'}\}$.  If $H$ is an $r$-graph on $[n]$, define $\phi_{i}(H)$ to be the graph on $[n]$ which has all edges of the form $\phi_{i}(e)$ for $e\in E(H)$; so in particular $\bigcup \phi_i(H)$ is the graph obtained by replacing each hyperedge of $H$ with a copy of $B$.

Let $\c{H}_{m,n}$ denote the set of $r$-graphs on $[n]$ with $m$ hyperedges and girth more than $\ell$, and let $\c{G}_{m,n}$ be the set of graphs on $[n]$ with $m$ edges and girth more than $\ell$. We claim that $\phi_{i}$ maps $\c{H}_{m,n}$ to $\c{G}_{m,n}$.  Indeed, if $H\in \c{H}_{m,n}$ then each hyperedge of $H$ contributes a distinct edge to $\phi_{i}(H)$ since $H$ is linear, so $e(\phi_i(H))=e(H)=m$.  One can show that if $\phi_i(e_1),\ldots,\phi_i(e_p)$ form a $p$-cycle in $\phi_i(H)$, then $e_1,\ldots,e_p$ form a Berge $p$-cycle in $H$; so $H\in \c{H}_{m,n}$ implies $\phi_i(H)$ does not contain a cycle of length at most $\ell$.
	
Let $\c{G}_{m,n}^{t} = \{(G_1,G_2,\dots,G_{t}) : G_i \in \c{G}_{m,n}\}$. Then we define a map 
$\phi:\c{H}_{m,n}\to \c{G}_{m,n}^{r-1+\lam}$ by 
\[ \phi(H)=(\phi_1(H),\ldots,\phi_{r-1+\lam}(H)).\]
We claim that this map is injective.  Indeed, fix some $H\in \c{H}_{m,n}$ and let $\c{B}(G)$ denote the set of books $B$ in the graph $G:=\bigcup \phi_i(H)\sub \pa^2 H$.  By definition of $\phi$ we have $E(H)\sub \c{B}(G)$ for all $H$.  Moreover, if $H\in \c{H}_{m,n}$ then Lemma~\ref{lem:book} implies $\c{B}(G)\sub E(H)$.  Thus $E(H)$ (and hence $H$) is uniquely determined by $G$, which is itself determined by $\phi(H)$, so the map is injective.  In total we conclude
	\[\N_m^r(n,\ell)=|\c{H}_{m,n}|\le|\c{G}_{m,n}^{r-1+\lambda}|= \N_m^2(n,\ell)^{r-1+\lambda},\]
proving Theorem~\ref{thm:reduceGen}. \hfill $\blacksquare$

\section{Proof of Theorem~\ref{thm:reduceR3}}\label{sec:noTriangles}
For arbitrary hypergraphs $H$, the map $\phi(H)=\pa^{r-1}H$ (let alone the map to $\pa^2 H$) is not injective.  However, we will show that this map is ``almost'' injective when considering $H$ which are $\C_\ell^r$-free. To this end, we say that a set of vertices $\{v_1,\ldots,v_r\}$ is a \textit{core set} of an $r$-graph $H$ if there exist distinct hyperedges $e_1,\ldots,e_r$ with $\{v_1,\ldots,v_r\}\sm\{v_i\}\sub e_i$ for all $i$.  The following observation shows that core sets are the only obstruction to $\phi(H)=\pa^{r-1}H$ being injective.

\begin{lem}\label{lem:shadow}
	Let $H$ be an $r$-graph.  If $\{v_1,\ldots,v_r\}$ induces a $K_r^{r-1}$ in $\pa^{r-1} H$, then either $\{v_1,\ldots,v_r\}\in E(H)$ or $\{v_1,\ldots,v_r\}$ is a core set of $H$.
\end{lem}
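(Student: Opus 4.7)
The plan is to unpack the two definitions and do a short case analysis based on whether any witnessing hyperedge happens to equal $\{v_1,\ldots,v_r\}$.

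First I would translate the hypothesis: saying that $\{v_1,\ldots,v_r\}$ spans a $K_r^{r-1}$ in $\pa^{r-1}H$ means that for every $i\in [r]$, the $(r-1)$-set $S_i:=\{v_1,\ldots,v_r\}\sm\{v_i\}$ lies in some hyperedge of $H$. For each $i$ I pick an arbitrary hyperedge $e_i\in E(H)$ with $S_i\sub e_i$. Since $|e_i|=r$ and $|S_i|=r-1$, we can write $e_i=S_i\cup\{w_i\}$ for a single extra vertex $w_i$, and either $w_i=v_i$ (in which case $e_i=\{v_1,\ldots,v_r\}$) or $w_i\notin\{v_1,\ldots,v_r\}$.

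Now split into two cases. If $w_i=v_i$ for some $i$, then $\{v_1,\ldots,v_r\}=e_i\in E(H)$ and the first conclusion holds. Otherwise $v_i\notin e_i$ for every $i$. The key observation is that for any $j\neq i$ we have $v_i\in S_j\sub e_j$; so $e_i$ is the unique one among $e_1,\ldots,e_r$ that misses $v_i$. This immediately forces $e_1,\ldots,e_r$ to be pairwise distinct, and together with the inclusions $S_i\sub e_i$ they witness precisely that $\{v_1,\ldots,v_r\}$ is a core set of $H$.

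I do not expect a real obstacle: the whole content is making sure that in the "non-edge" branch one gets $r$ \emph{distinct} hyperedges for free. The slight subtlety is that one might worry about having to rechoose the $e_i$ to make them distinct, but the pigeonhole-style argument above (each $e_i$ is identified by the unique $v_j$ it omits from $\{v_1,\ldots,v_r\}$) handles this automatically. Thus the lemma is really a bookkeeping step that will later feed into the almost-injectivity of $H\mapsto \pa^{r-1}H$ on $\c{C}_\ell^r$-free hypergraphs used in the proof of Theorem~\ref{thm:reduceR3}.
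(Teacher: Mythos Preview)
Your proof is correct and follows essentially the same approach as the paper's: choose witnessing hyperedges $e_i\supseteq\{v_1,\ldots,v_r\}\sm\{v_i\}$ and split into the cases where some $e_i$ equals $\{v_1,\ldots,v_r\}$ or not. You even supply the one detail the paper leaves implicit, namely the reason the $e_i$ are pairwise distinct in the second case (each $e_i$ is the unique chosen hyperedge omitting $v_i$).
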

\begin{proof}
	By assumption of $\{v_1,\ldots,v_r\}$ inducing a $K_r^{r-1}$ in $\pa^{r-1} H$, for all $i$ there exist $e'_i\in E(\pa^{r-1}H)$ with $e'_i=\{v_1,\ldots,v_r\}\sm \{v_i\}$.  By definition of $\pa^{r-1}H$, this means there exist (not necessarily distinct) $e_i\in E(H)$ with $e_i\supseteq e_i'=\{v_1,\ldots,v_r\}\sm \{v_i\}$.  Given this, either $e_i=\{v_1,\ldots,v_r\}$ for some $i$, or all of the $e_i$ distinct, in which case $\{v_1,\ldots,v_r\}$ is a core set of $H$.  In either case we conclude the result.

\end{proof}

We next show that $\C_\ell^r$-free $r$-graphs have few core sets.
\begin{lem}\label{lem:BKd}
	Let $\ell,r\ge 3$ and let $H$ be a $\C_{\ell}^r$-free $r$-graph with $m$ hyperedges.  The number of core sets in $H$ is at most $\ell^{2}r^2 m$.
\end{lem}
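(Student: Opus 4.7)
My plan is to charge each core set to one of its $r$ witness hyperedges and to bound the number of core sets witnessed by any single hyperedge.

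In the easy case $\ell \le r$, every core set $S = \{v_1,\ldots,v_r\}$ with witnesses $e_1,\ldots,e_r$ already produces a Berge $\ell$-cycle. Indeed, setting $u_j := v_{(j \bmod \ell)+1}$ for $j = 1,\ldots,\ell$ yields $\ell$ distinct vertices of $S$ with $u_j, u_{j+1}\in e_j$, since both indices differ from $j$ and so both vertices lie in $S\setminus\{v_j\}\subseteq e_j$. This contradicts $\mathcal{C}_\ell^r$-freeness, so $H$ has no core sets at all and the bound is trivial.

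For $\ell > r$, I would fix a hyperedge $f\in E(H)$ and count the core sets $S$ for which $f$ is a witness. Since $|S\cap f|\ge r-1$, any such $S$ is determined by its trace $T := S\cap f$, an $(r-1)$-subset of $f$ (at most $r$ choices), together with the extra vertex $v$ for which $S = T\cup\{v\}$. For each valid $v$, the core set condition supplies, for every $t\in T$, a hyperedge $e_t(v)\ne f$ containing $(T\setminus\{t\})\cup\{v\}$. The key subclaim is that, for each $(f,T)$, the number of valid $v$'s is at most $\ell^2 r$. Granting this, each $f$ witnesses at most $r\cdot \ell^2 r = \ell^2 r^2$ core sets; since every core set has at least $r$ distinct witnesses, double-counting pairs (core set, witness) gives the bound $r N \le m\cdot \ell^2 r^2$, i.e.\ $N\le \ell^2 r m \le \ell^2 r^2 m$.

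To prove the subclaim I would argue by contradiction. If there were more than $\ell^2 r$ valid $v$'s, I would form a Berge walk alternating between vertices of $T$ and valid $v$'s by using the edges $e_t(v)$ for transitions, and then close it (possibly using $f$) into a Berge $\ell$-cycle, contradicting $\mathcal{C}_\ell^r$-freeness. The main obstacle is producing a Berge cycle of \emph{exactly} length $\ell$ on distinct vertices and distinct hyperedges: naive walks may close prematurely into shorter Berge cycles, which are harmless under $\mathcal{C}_\ell^r$-freeness, or may reuse a hyperedge. Premature closures and edge-repetitions force specific combinatorial coincidences (for instance, $e_t(v) = e_t(v')$ forces $(T\setminus\{t\})\cup\{v,v'\}$ to span a hyperedge), and each such coincidence can occur only boundedly often. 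A pigeonhole over the possible closure positions of the walk — which is where the $\ell^2$ factor in the final constant arises — together with careful bookkeeping of the coincidences, should then yield the desired bound $\ell^2 r$ on valid $v$'s and complete the proof.
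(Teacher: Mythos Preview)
Your treatment of the case $\ell\le r$ is correct and essentially matches the paper's argument. The gap is in the case $\ell>r$: your key subclaim (that for each pair $(f,T)$ there are at most $\ell^2 r$ valid $v$'s) is asserted but not proved, and the sketched argument cannot work as stated.

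The specific obstruction is this. An ``alternating walk between vertices of $T$ and valid $v$'s'' visits vertices from a set $T$ of size $r-1$ at every other step. Hence such a walk can contain at most $2(r-1)$ distinct vertices, and so can close up into a Berge cycle of length at most $2(r-1)$. When $\ell>2(r-1)$ (which certainly occurs, since $\ell$ can be arbitrarily large compared with $r$), there is no way to produce a Berge $\ell$-cycle by this mechanism, regardless of how many valid $v$'s are available and regardless of pigeonhole or bookkeeping. The edges $e_t(v)$ you have access to each contain exactly one valid $v$ together with $r-2$ elements of $T$ and one further unknown vertex; they give you no direct edges between two valid $v$'s, so you cannot simply splice in more $v$'s to lengthen the cycle. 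In short, the local structure around a single pair $(f,T)$ is too shallow to force a cycle of prescribed length $\ell$.

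The paper avoids this by not trying to bound the degree of a single $(r-1)$-set. Instead it runs an iterative deletion: at stage $i$ one peels off all core sets containing some $(r-1)$-subset of current degree at most $\ell r$, removing at most $\ell r^2 m$ core sets per stage. If after $\ell-r$ stages something survives, then one has a core set all of whose $(r-1)$-subsets have high degree at that level, and inductively at every earlier level. This layered high-degree condition is exactly what lets one extend a partial Berge path by one \emph{new} vertex at each of $\ell-r$ steps (choosing the new vertex outside all previously used hyperedges), producing a Berge $\ell$-cycle and a contradiction. The point is that a single high-degree $(r-1)$-set does not suffice; one needs the degree to remain high as one moves along the path, which is what the layered deletion guarantees and what your single-$(f,T)$ argument lacks.
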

\begin{proof}
	We claim that $H$ contains no core sets if $\ell\le r$.  Indeed, assume for contradiction that $H$ contained a core set $\{v_1,\ldots,v_r\}$ with distinct hyperedges $e_i\supseteq \{v_1,\ldots,v_r\}\sm \{v_i\}$.  It is not difficult to see that the hyperedges $e_1,\ldots,e_\ell$ form a Berge $\ell$-cycle, a contradiction to $H$ being $\C_{\ell}^r$-free.  Thus from now on we may assume $\ell>r$.
	
	Let $\c{A}_1$ denote the set of core sets in $H$, and for any $\c{A}'\sub \c{A}_1$ and $(r-1)$-set $S$, define $d_{\c{A}'}(S)$ to be the number of core sets $A\in \c{A}'$ with $S\sub A$.  Observe that $d_{\c{A}_1}(S)>0$ for at most ${r\choose r-1}m=rm$ $(r-1)$-sets $S$, since in particular $S$ must be contained in a hyperedge of $H$.
	
	Given $\c{A}_i$, define $\c{A}'_i\sub \c{A}_i$ to be the core sets $A\in \c{A}_i$ which contain an $(r-1)$-set $S$ with $d_{\c{A}_i}(S)\le \ell r$, and let $\c{A}_{i+1}=\c{A}_i\sm \c{A}'_i$.  Observe that $|\c{A}'_i|\le \ell r\cdot r m$ since each $(r-1)$-set $S$ with $d_{\c{A}_i}(S)>0$ is contained in at most $\ell r$ elements of $\c{A}'_i$.  In particular,
	\begin{equation}
		\label{eq:A}|\c{A}_1|\le (\ell-r)\cdot \ell r^2m+|\c{A}_{\ell-r+1}|\le \ell^{2}r^2 m+|\c{A}_{\ell-r+1}|.
	\end{equation}
	
	Assume for the sake of contradiction that $\c{A}_{\ell-r+1}\ne \emptyset$.   We prove by induction on $r\le i\le \ell$ that one can find distinct vertices $v_1,\ldots,v_i$ and distinct hyperedges $e_1,\ldots,e_{i-1},\tilde{e}_i$ such that $v_j,v_{j+1}\in e_{j}$ for $1\le j<i$ and $v_1,v_i\in \tilde{e}_i$, and such that $\{v_i,v_{i-1},\ldots,v_{i-r+2},v_1\}\in \c{A}_{\ell-i+1}$.  For the base case, consider any $\{v_r,v_{r-1},\ldots,v_1\}\in \c{A}_{\ell-r+1}$.  As this is a core set, there exist distinct hyperedges $e_j\supseteq \{v_1,\ldots,v_r\}\sm \{v_{j+2}\}$ and $\tilde{e}_r\supseteq\{v_1,\ldots,v_r\}\sm \{v_{2}\}$, proving the base case of the induction. 
	
	Assume that we have proven the result for $i<\ell$.  By assumption of $\{v_i,v_{i-1},\ldots,v_{i-r+2},v_1\}\in \c{A}_{\ell-i+1}$, we have $\{v_i,v_{i-1},\ldots,v_{i-r+2},v_1\}\notin \c{A}_{\ell-i}'$, so there exists a set of vertices $\{u_1,\ldots,u_{\ell r+1}\}$ such that $\{v_i,v_{i-1},\ldots,v_{i-r+3},v_1,u_j\}\in \c{A}_{\ell-i}$ for all $j$.  Because $|\bigcup_{k=1}^{i-1} e_k|\le \ell r$, there exists some $j$ such that $u_j\notin \bigcup_{k=1}^{i-1} e_k$.  For this $j$, let $v_{i+1}:=u_j$ and let $e_i,\tilde{e}_{i+1}$ be distinct hyperedges containing $v_i,v_{i+1}$ and $v_1,v_{i+1}$ respectively, which exist by assumption of this being a core set.  Note that $v_{i+1}$ is distinct from every other $v_{i'}$ since $v_{i'}\in \bigcup_{k=1}^{i-1} e_k$ for $i'\le i$, and similarly the hyperedges $e_i,\tilde{e}_{i+1}$ are distinct from every hyperedge $e_{i'}$ with $i'<i$ since these new hyperedges contain $v_{i+1}\notin \bigcup_{k=1}^{i-1} e_k$.  This proves the inductive step and hence the claim.  The $i=\ell$ case of this claim implies that $H$ contains a Berge $\ell$-cycle, a contradiction.  Thus $\c{A}_{\ell-r+1}=\emptyset$, and the result follows by \eqref{eq:A}.
\end{proof}
Combining these two lemmas gives the following result, which allows us to reduce from $r$-graphs to $(r-1)$-graphs.  We recall that $\N_{[m]}^r(n,\c{F})$ denotes the number of $n$-vertex $\c{F}$-free $r$-graphs on at most $m$ hyperedges.
\begin{prop}
		For each $\ell,r \geq 3$, there exists $c = c(\ell,r)$ such that
		\begin{equation*}
			\N_{[m]}^r(n,\C_{\ell}^r) \le 2^{c m}\cdot \N_{[m]}^r(n,\C_{\ell}^{r-1})^{r}.
		\end{equation*}
\end{prop}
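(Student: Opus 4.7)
The plan is to mimic the structure of the proof of Theorem~\ref{thm:reduceGen}, but using Lemmas~\ref{lem:shadow} and~\ref{lem:BKd} to pass from $\C_\ell^r$-free $r$-graphs to $\C_\ell^{r-1}$-free $(r-1)$-graphs, rather than going all the way to $2$-graphs in one step.  For $1\le i\le r$ and each $r$-set $e=\{v_1<v_2<\cdots<v_r\}\sub[n]$, set $\phi_i(e):=e\sm\{v_i\}$, and for any $r$-graph $H$ on $[n]$ let $\phi_i(H)$ denote the $(r-1)$-graph on $[n]$ whose edges are the sets $\phi_i(e)$ as $e$ ranges over $E(H)$.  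I will analyze the map $\phi(H)=(\phi_1(H),\ldots,\phi_r(H))$ from $\N_{[m]}^r(n,\C_\ell^r)$ into $r$-tuples of $(r-1)$-graphs on $[n]$.

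The first step is to verify that each $\phi_i(H)$ lies in $\N_{[m]}^{r-1}(n,\C_\ell^{r-1})$.  Since $\phi_i$ is a function on $E(H)$, clearly $|E(\phi_i(H))|\le|E(H)|\le m$.  For the $\C_\ell^{r-1}$-freeness, suppose for contradiction that $\phi_i(H)$ contains a Berge $\ell$-cycle on distinct vertices $u_1,\ldots,u_\ell$ via distinct $(r-1)$-edges $f_1,\ldots,f_\ell\in\phi_i(H)$ with $u_j,u_{j+1}\in f_j$.  The distinctness of the $f_j$'s forces distinct hyperedges $e_1,\ldots,e_\ell\in E(H)$ with $\phi_i(e_j)=f_j$, and then $u_j,u_{j+1}\in f_j\sub e_j$ exhibits a Berge $\ell$-cycle in $H$, contradicting $\C_\ell^r$-freeness.

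To bound the fibers of $\phi$, note that given $\phi(H)$, the $(r-1)$-shadow $\pa^{r-1}H=\bigcup_i\phi_i(H)$ is determined.  By Lemma~\ref{lem:shadow}, every copy of $K_r^{r-1}$ in $\pa^{r-1}H$ is either a hyperedge of $H$ or a core set of $H$; by Lemma~\ref{lem:BKd} there are at most $\ell^2r^2m$ core sets, and of course at most $m$ hyperedges, so $\pa^{r-1}H$ contains at most $(1+\ell^2r^2)m$ copies of $K_r^{r-1}$.  Recovering $E(H)$ from $\phi(H)$ then amounts to specifying which of these copies are actual hyperedges of $H$, so each fiber of $\phi$ has size at most $2^{(1+\ell^2r^2)m}$.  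Combining the two counts yields
\[\N_{[m]}^r(n,\C_\ell^r)\le 2^{(1+\ell^2r^2)m}\cdot\N_{[m]}^{r-1}(n,\C_\ell^{r-1})^{r},\]
which is the desired bound with $c=1+\ell^2r^2$. The most delicate step is the verification that $\phi_i(H)$ is $\C_\ell^{r-1}$-free, which crucially uses the distinctness of the $f_j$'s to lift to distinct hyperedges of $H$; the remainder is bookkeeping enabled by Lemmas~\ref{lem:shadow} and~\ref{lem:BKd}.
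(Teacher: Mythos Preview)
Your proof is correct and follows essentially the same approach as the paper: define $\phi_i(H)$ by dropping the $i$th smallest vertex from each hyperedge, observe that $\bigcup_i\phi_i(H)=\pa^{r-1}H$, and then use Lemmas~\ref{lem:shadow} and~\ref{lem:BKd} to bound the number of $K_r^{r-1}$'s in $\pa^{r-1}H$ by $(1+\ell^2r^2)m$, which limits the number of preimages of any tuple $(\phi_1(H),\ldots,\phi_r(H))$. The paper phrases the final counting via an auxiliary injective map $\psi(H)=(\phi(H),E(H))$ rather than directly bounding fibers of $\phi$, but this is only a cosmetic difference.
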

\begin{proof}
	If $e=\{v_1,v_2,\ldots,v_r\}\sub [n]$ is any $r$-set with $v_1<v_2<\cdots<v_r$, let $\phi_i(e)=\{v_1,\ldots,v_r\}\sm \{v_i\}$.  Given an $r$-graph $H$ on $[n]$, let $\phi_i(H)$ be the $(r-1)$-graph on $[n]$ with edge set $\{\phi_i(e):e\in E(H)\}$, and define $\phi(H)=(\phi_1(H),\phi_2(H),\ldots,\phi_r(H))$ and $\psi(H)=(\phi(H),E(H))$.  Observe that $\bigcup \phi_i(H)=\pa^{r-1}H$.   Let $\c{H}_{[m],n}$ denote the set of all $r$-graphs on $[n]$ with at most $m$ hyperedges which are $\C_{\ell}^r$-free, and let $\phi(\c{H}_{[m],n}),\psi(\c{H}_{[m],n})$ denote the image sets of $\c{H}_{[m],n}$ under these respective maps.  Observe that $\psi$ is injective since it records $E(H)$, so it suffices to bound how large $\psi(\c{H}_{[m],n})$ can be.
	
	Let $\c{G}_{[m],n}$ denote the set of $(r-1)$-graphs on $[n]$ which have at most $m$ edges and which are $\C_\ell^{r-1}$-free.  It is not difficult to see that $\phi(\c{H}_{[m],n})\sub \c{G}_{[m],n}^{r}$.  We observe by Lemmas~\ref{lem:shadow} and \ref{lem:BKd} that for any $(G_1,G_2,\ldots,G_r)\in \phi(\c{H}_{[m],n})$, say with $\phi(H)=(G_1,\ldots,G_r)$, there are at most $(1+\ell^2r^2)m$ copies of $K_r^{r-1}$ in $\bigcup G_i=\pa^{r-1}H$.  We also observe that if $((G_1,G_2,\ldots,G_r),E)\in \psi(\c{H}_{[m],n})$, then $E$ is a set of at most $m$ copies of $K_r^{r-1}$ in $\bigcup G_i$. Thus given any $(G_1,\ldots,G_r)\in \phi(\c{H}_{[m],n})\sub \c{G}_{[m],n}^{r}$, there are at most $2^{(1+\ell^2r^2)m}$ choices of $E$ such that $((G_1,\ldots,G_r),E)\in \psi(\c{H}_{[m],n})$.  We conclude that
	\[\N_{[m]}(n,\C_\ell^r)=|\c{H}_{[m],n}|\le |\c{G}_{[m],n}|^{r}\cdot 2^{(1+\ell^2r^2)m}=\N_{[m]}^r(n,\C_{\ell}^{r-1})^{r}\cdot 2^{(1+\ell^2r^2)m},\]
	proving the result.
\end{proof}
Applying this proposition repeatedly gives $\N_{[m]}^r(n,\C_{\ell}^r)\le 2^{cm} \N_{[m]}^2(n,C_\ell)^{r!/2}$.  Combining this with the trivial inequality $\N_{m}^r(n,\C_{\ell}^r)\le \N_{[m]}^r(n,\C_{\ell}^r)$ gives Theorem~\ref{thm:reduceR3}. \hfill $\blacksquare$

\section{Proof of Theorems~\ref{thm:randTri} and~\ref{thm:randGirth}}\label{sec:random}

To prove that our bounds hold a.a.s.,\ we use the Chernoff bound~\cite{ProbMeth}.
\begin{prop}[\cite{ProbMeth}]\label{prop:Chern}
	Let $X$ denote a binomial random variable with $N$ trials and probability $p$ of success.  For any $\epsilon>0$ we have $\Pr[|X-pN|>\epsilon pN]\le 2\exp(-\epsilon^2 pN/2)$.
\end{prop}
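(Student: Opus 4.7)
The plan is to prove the two-sided Chernoff inequality via the classical exponential moment method, handling the upper and lower tails separately using Markov's inequality applied to $e^{tX}$ (respectively $e^{-tX}$) for an optimized choice of the parameter $t$, and then applying a union bound, which contributes the leading factor of $2$.

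For the upper tail I would proceed as follows. For any $t>0$, Markov's inequality gives
\[\Pr[X\ge (1+\epsilon)pN]\le e^{-t(1+\epsilon)pN}\cdot \E[e^{tX}].\]
Writing $X=\sum_{i=1}^{N}X_i$ as a sum of independent Bernoulli$(p)$ variables, the moment generating function factors as $\E[e^{tX}]=(1-p+pe^t)^N$, and the elementary inequality $1+x\le e^x$ bounds this above by $\exp(pN(e^t-1))$. Optimizing with the choice $t=\log(1+\epsilon)$ then produces a Kullback--Leibler-type exponent
\[\Pr[X\ge (1+\epsilon)pN]\le \exp\!\bigl(-pN\bigl((1+\epsilon)\log(1+\epsilon)-\epsilon\bigr)\bigr).\]
The lower tail is treated identically, instead applying Markov's inequality to $e^{-tX}$ for $t>0$; a parallel optimization yields the companion bound with exponent $-pN\bigl((1-\epsilon)\log(1-\epsilon)+\epsilon\bigr)$.

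The remaining step is the elementary calculus inequality that bounds each of these relative entropies from below by $\epsilon^2/2$ in the relevant range of $\epsilon$; this is verified by comparing Taylor expansions and standard monotonicity arguments, and it converts both KL-type exponents into the stated form $\epsilon^2 pN/2$. A union bound over the two tails supplies the factor $2$ and completes the argument. Since this is the textbook exponential moment computation, I do not anticipate any substantive obstacle -- indeed, as the statement is attributed to Alon--Spencer~\cite{ProbMeth}, one may simply invoke it directly from that reference.
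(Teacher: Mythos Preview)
The paper does not prove this proposition; it is simply quoted from Alon--Spencer~\cite{ProbMeth} and used as a black box. Your outline is the standard exponential-moment derivation and, as you yourself note at the end, one may just invoke the reference---which is exactly what the paper does.

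One small caveat on your sketch: the inequality $(1+\epsilon)\log(1+\epsilon)-\epsilon\ge \epsilon^2/2$ that you assert for the upper tail is in fact \emph{false} for every $\epsilon>0$ (the Taylor expansion is $\epsilon^2/2-\epsilon^3/6+\cdots$). The upper-tail KL exponent is only $\ge \epsilon^2/3$ (or $\ge \epsilon^2/(2+\epsilon)$), which is why many references state the two-sided bound with $3$ rather than $2$ in the denominator. The form with $2$ is correct for the lower tail, and since in the paper's applications the bound is only used qualitatively (to show concentration a.a.s.), the precise constant is immaterial; but your ``elementary calculus inequality'' step as written would not go through for the upper tail.
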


{\bf Proof of the upper bounds in Theorem~\ref{thm:randGirth}.} Let
 \[p_0 = n^{-\f{(r - 1 + \lambda)(k-1)}{2k - 1}} (\log n)^{(r - 1 + \lambda)k}. \]
 For $p\ge p_0$, define
 \[m=p^{\rec{(r-1+\lam)k}}n^{1+\rec{k}}\log n,\]
and note that this is large enough to apply Theorem~\ref{thm:MS} for $p \geq p_0$.  Let $Y_m$ denote the number of subgraphs of $H_{n,p}^r$ which are $\C_{[\ell]}^r$-free and have exactly $m$ edges, and note that $\ex(H_{n,p}^r,\C_{[\ell]}^r)\ge m$ if and only if $Y_m\ge 1$.  By Markov's inequality, Theorem \ref{thm:reduceGen}, and Theorem~\ref{thm:MS}:
\begin{eqnarray*}
\Pr[Y_m\ge 1] &\le&\E[Y_m]=p^m\cdot \N_m^r(n,\ell) \\ &\le& p^m \cdot \N_m^2(n,\ell)^{r-1+\lambda}\\
&\leq& \Bigl(p^{\frac{1}{r - 1 + \lambda}} e^{c} (\log n)^{k - 1}\Bigl(\frac{n^{1 + \f{1}{k}}}{m}\Bigr)^{k}\Bigr)^{m(r - 1 + \lambda)} \\
&=& \Bigl(\frac{e^c}{\log n}\Bigr)^{m(r - 1 + \lambda)}.
\end{eqnarray*}
The right hand side converges to zero, so for $p \ge p_0$, a.a.s:
\[ \ex(H_{n,p}^r,\C_{[\ell]}^r) < m.\]

As $\E[\ex(H_{n,p}^r,\C_{[\ell]}^r)]$ is non-decreasing in $p$, the bound \[\ex(H_{n,p}^r,\C_{[\ell]}^r)<n^{1+\rec{2k-1}}(\log n)^2\]
continues to hold a.a.s.\  for all $p<p_0$.   \hfill $\blacksquare$

\medskip

{\bf Proof of the upper bound in Theorem~\ref{thm:randTri}.}  This proof is almost identical to the previous, so we omit some of the redundant details.  Let $m=p^{\rec{2r-3}}n^2\log n$ and let $Y_m$ denote the number of subgraphs of $H_{n,p}^r$ which are $\C_{[\ell]}^r$-free and have exactly $m$ edges.   By Markov's inequality, Theorem \ref{thm:reduceGen}, and the trivial bound $\N_m^2(n,3)\le {n^2\choose m}$ which is valid for all $m$, we find for all $p$
\[\Pr[Y_m\ge 1]\le p^m (en^2/m)^{(2r-3)m}=(e/\log n)^m.\]
This quantity converges to zero, so we conclude the result by the same reasoning as in the previous proof. \hfill $\blacksquare$

This proof shows that for all $p$ we have $\E[\ex(H_{n,p}^r,\C_{[\ell]}^r)]<p^{\rec{2r-3}}n^2\log n$.  However, for $p\le  n^{-r+3/2}$ this is weaker than the trivial upper bound $\E[\ex(H_{n,p}^r,\C_{[\ell]}^r)]\le p{n\choose r}$.


\medskip

{\bf Proof of the lower bounds in Theorem \ref{thm:randGirth}.} We use homomorphisms similar to
Foucaud, Krivelevich and Perarnau~\cite{FKP} and Perarnau and Reed~\cite{PR}. If $F$ and $F'$ are hypergraphs and $\chi : V(F) \rightarrow V(F')$ is any map, we let $\chi(e) = \{\chi(u) : u \in e\}$ for any $e\in E(F)$. For two $r$-graphs $F$ and $F'$, a map $\chi:V(F)\to V(F')$ is a \textit{homomorphism} if $\chi(e)\in E(F')$ for all $e\in E(F)$, and $\chi$ is a {\em local isomorphism}
if $\chi$ is a homomorphism and $\chi(e)\ne \chi (f)$ whenever $e,f\in E(F)$ with $e\cap f \ne \emptyset$. A key lemma is the following:

\begin{lem}\label{lem:homB}
If $F \in  \C_{[\ell]}^r$ and $\chi : F \rightarrow F'$ is a local isomorphism, then $F'$ has girth at most $\ell$.
\end{lem}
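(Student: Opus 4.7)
The plan is to work in the bipartite incidence graph $I(H)$ of an $r$-graph $H$, whose vertex set is $V(H)\sqcup E(H)$ and whose edges are the incidences $\{v,f\}$ with $v\in f\in E(H)$. A Berge $q$-cycle in $H$ corresponds exactly to a cycle $C_{2q}$ in $I(H)$, so it is enough to exhibit a cycle of length at most $2\ell$ in $I(F')$.

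Write $F$ as a Berge $p$-cycle with $2\le p\le \ell$: distinct vertices $v_1,\ldots,v_p$ and distinct hyperedges $e_1,\ldots,e_p$ with $v_i,v_{i+1}\in e_i$ (indices mod $p$). Setting $u_i:=\chi(v_i)$ and $f_i:=\chi(e_i)\in E(F')$, the cyclic sequence
\[
W \;=\; u_1, f_1, u_2, f_2, \ldots, u_p, f_p, u_1
\]
is a closed walk of length $2p$ in $I(F')$. I would first check that $W$ never backtracks at any node. At a vertex-node: $\chi(e_i)\in E(F')$ has exactly $r$ elements, so $\chi$ is injective on $e_i$, forcing $u_i\ne u_{i+1}$. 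At a hyperedge-node: $v_i\in e_{i-1}\cap e_i$, so the local isomorphism hypothesis gives $f_{i-1}\ne f_i$. Thus at every node along $W$ the incoming and outgoing edges of $I(F')$ are distinct.

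Next I would invoke the following elementary graph-theoretic fact: a non-backtracking closed walk of positive length in any simple graph must contain a cycle inside its edge-trace. Indeed, if the trace were a forest, then the connected component of the walk would be a tree having at least one leaf, and when the walk visits that leaf it must leave along the same edge it arrived on, contradicting non-backtracking. Applied to $W$, this produces a cycle in $I(F')$ lying inside the trace of $W$. Being bipartite, this cycle has even length $2q\ge 4$, and since the trace contains at most $2p$ edges we obtain $2q\le 2p$, that is, $q\le p\le \ell$. Translating the $C_{2q}$ back to hypergraph language yields a Berge $q$-cycle in $F'$ with $q\le \ell$, so $F'$ has girth at most $\ell$, as required.

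The only place where genuine care is needed is the non-backtracking-closed-walk-contains-a-cycle claim, which should be stated and justified cleanly via the leaf-of-the-trace argument sketched above. Everything else is direct bookkeeping: the injectivity of $\chi$ on each hyperedge is forced by $\chi(e_i)$ being an $r$-set, and the inequality $f_{i-1}\ne f_i$ is exactly the local-isomorphism hypothesis applied to consecutive hyperedges of the Berge cycle, which share a vertex.
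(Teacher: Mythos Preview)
Your proof is correct and takes a genuinely different route from the paper. The paper argues directly by case analysis: it looks for the first repetition among the images $\chi(e_i)$ (and, within the resulting segment, for the first repetition among the $\chi(v_i)$), then reads off a shorter Berge cycle from the minimal repeated block. Your approach instead passes to the bipartite incidence graph $I(F')$, where Berge $q$-cycles are exactly ordinary $2q$-cycles, and reduces everything to the single combinatorial fact that a non-backtracking closed walk in a simple graph must contain a cycle in its edge-trace. This unifies all of the paper's cases into one clean step and avoids the index-chasing; the paper's argument, by contrast, is entirely self-contained and does not need the auxiliary incidence-graph lemma.

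Two small points of bookkeeping. First, your labels ``at a vertex-node'' and ``at a hyperedge-node'' are swapped: at a vertex-node $u_i$ the non-backtracking condition is $f_{i-1}\ne f_i$ (this is where the local-isomorphism hypothesis is used), while at a hyperedge-node $f_i$ the condition is $u_i\ne u_{i+1}$ (this is where $|\chi(e_i)|=r$ is used). Both facts are correctly established; only the tags are interchanged. Second, the leaf argument should note that a tree with at least one edge has at least \emph{two} leaves, and at most one of them can be the start/end of the closed walk; it is at the other leaf that backtracking is forced. With these tweaks the write-up is clean.
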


\begin{proof} Let $F$ be a Berge $p$-cycle with $p \leq \ell$ and $E(F) = \{e_1,e_2,\dots,e_p\}$. Then there exist distinct vertices $v_1,v_2,\dots,v_p$ such that $v_i \in e_i \cap e_{i + 1}$ for $i < p$ and $v_p \in e_p \cap e_1$. First assume there exists $i \ne j$ such that $\chi(e_i)=\chi(e_j)$.  By reindexing, we can assume $\chi(e_1)=\chi(e_{k})$ for some $k>1$, and further that $\chi(e_i)\ne \chi(e_j)$ for any $1\le i<j<k$.  Note that $k\ge 3$ since $e_1\cap e_2\ne \emptyset$ and $\chi$ is a local isomorphism.  If we also have $\chi(v_i)\ne \chi(v_j)$ for all $1\le i<j<k$, then $\chi(v_i) \in \chi(e_i) \cap \chi(e_{i + 1})$ for $i < k - 1$ and $\chi(v_{k - 1}) \in \chi(e_{k - 1}) \cap \chi(e_1)$, so
$\chi(e_1),\chi(e_2),\dots,\chi(e_{k - 1})$ is the edge set of a Berge $(k - 1)$-cycle in $F'$ as required.

Suppose $\chi(v_i)=\chi(v_j)$ for some $1\le i<j<k$, and as before we can assume there exists no $i\le i'<j'<j$ with $\chi(v_{i'})=\chi(v_{j'})$.  Then $\chi(v_i),\chi(v_{i+1}),\ldots,\chi(v_{j-1})$ are distinct vertices with $\chi(v_h) \in \chi(e_h) \cap \chi(e_{h + 1})$ for $i \leq h < j - 1$ and
$\chi(v_{j - 1}) \in \chi(e_{j - 1}) \cap \chi(e_1)$. Note that $\chi(v_i)\ne \chi(v_{i+1})$ since this would imply $|\chi(e_{i})|<r$, contradicting that $\chi$ is a homomorphism, so $j>i+1$.  Thus the hyperedges $\chi(e_i),\chi(e_{i+1}),\ldots,\chi(e_{j-1})$ form a Berge $(j - i)$-cycle in $F'$ with $j-i\ge 2$ as desired.

This proves the result if $\chi(e_i)=\chi(e_j)$ for some $i\ne j$.  If this does not happen and the $\chi(v_i)$ are all distinct, then $F'$ is a Berge $p$-cycle, and if $\chi(v_i) = \chi(v_j)$ then the same proof as above gives a Berge $(j - i)$-cycle in $F'$.
\end{proof}

The following lemma allows us to find a relatively dense subgraph of large girth in any $r$-graph whose maximum $i$-degree is not too large, where the \textit{$i$-degree} of an $i$-set $S$ is the number of hyperedges containing $S$.

\begin{lem}\label{lem:homGen}
	Let $\ell,r\ge 3$ and let $H$ be an $r$-graph with maximum $i$-degree $\Del_i$ for each $i \geq 1$.  If $t\ge r^24^r \Del_i^{1/(r-i)}$ for all $i \geq 1$, then $H$ has a subgraph $H'$ of girth larger than $\ell$  with
\[ e(H') \ge \ex(t,\C_{[\ell]}^r)t^{-r}\cdot e(H).\]
\end{lem}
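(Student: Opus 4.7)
My approach is a probabilistic embedding into an extremal host. Fix $G^\ast$ to be a $\C_{[\ell]}^r$-free $r$-graph on vertex set $[t]$ with exactly $\ex(t,\C_{[\ell]}^r)$ hyperedges, and sample $\chi\colon V(H)\to[t]$ by choosing each $\chi(v)$ independently and uniformly. Define $H'\subseteq H$ as the subhypergraph whose hyperedges $e\in E(H)$ satisfy (i) $\chi|_e$ is injective, (ii) $\chi(e)\in E(G^\ast)$, and (iii) no hyperedge $f\ne e$ with $e\cap f\ne\emptyset$ has $\chi(f)=\chi(e)$. By construction $\chi$ restricted to $H'$ is a local isomorphism into $G^\ast$, so Lemma~\ref{lem:homB} applied contrapositively forces $H'$ to have girth larger than $\ell$: any Berge cycle of length at most $\ell$ in $H'$ would produce one in the $\C_{[\ell]}^r$-free host $G^\ast$.

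\textbf{Expected size of $H'$.} It remains to show $\E[e(H')]\ge \ex(t,\C_{[\ell]}^r)\, t^{-r}\, e(H)$, from which the lemma follows by the probabilistic method. Fix $e\in E(H)$. Since $\chi$ maps $e$ bijectively onto a given $r$-set $S\subseteq[t]$ with probability $r!/t^r$, summing over $S\in E(G^\ast)$ gives $\Pr[\text{(i) and (ii)}]=r!\,\ex(t,\C_{[\ell]}^r)/t^r$. Conditioned on $\chi(e)=S$ with $\chi|_e$ injective, a hyperedge $f\ne e$ with $|e\cap f|=i$ violates (iii) with probability $(r-i)!/t^{r-i}$, because the values $\chi(f\setminus e)$ are uniformly independent of $\chi|_e$ and must bijectively fill the remaining $r-i$ slots of $S$. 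Since at most $\binom{r}{i}\Del_i$ hyperedges $f\ne e$ satisfy $|e\cap f|=i$, a union bound yields
\[
\Pr[\text{(iii) fails}\mid\text{(i),(ii)}]\;\le\;\sum_{i=1}^{r-1}\binom{r}{i}(r-i)!\,\frac{\Del_i}{t^{r-i}}\;=\;\sum_{i=1}^{r-1}\frac{r!}{i!}\cdot\frac{\Del_i}{t^{r-i}}.
\]

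\textbf{Where the hypothesis enters and main obstacle.} The main technical step is to show the failure probability above is at most $1/2$; this is exactly where the assumption $t\ge r^2 4^r \Del_i^{1/(r-i)}$ is used. It yields $\Del_i/t^{r-i}\le(r^2 4^r)^{-(r-i)}$, and combined with $r!/i!\le r^{r-i}$ each summand is at most $(r\cdot 4^r)^{-(r-i)}$, a geometric tail in $r-i$ much smaller than $1/2$. Therefore $\Pr[e\in E(H')]\ge \frac{1}{2}\cdot r!\,\ex(t,\C_{[\ell]}^r)/t^r\ge \ex(t,\C_{[\ell]}^r)/t^r$ using $r!/2\ge 1$ for $r\ge 3$, and linearity of expectation completes the bound. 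The key obstacle is precisely this conditional calculation: the hypothesis has to constrain every $i$-degree individually because a hyperedge can overlap $e$ in any amount $1\le i\le r-1$, and each overlap scale contributes its own $t^{-(r-i)}$ factor that must be dominated simultaneously.
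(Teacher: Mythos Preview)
Your proof is correct and follows essentially the same approach as the paper: a uniformly random map into an extremal $\C_{[\ell]}^r$-free host, retaining those hyperedges that land on host edges with no local collisions, then invoking Lemma~\ref{lem:homB} for the girth conclusion and bounding the conditional collision probability via a union bound over overlap sizes $i$. Your bookkeeping is slightly sharper than the paper's (you use $\binom{r}{i}\Del_i$ and the exact collision probability $(r-i)!/t^{r-i}$, whereas the paper uses the cruder $2^r\Del_i$ and $(r/t)^{r-i}$), but the argument is otherwise identical.
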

\begin{proof}
	Let $J$ be an extremal $\C_{[\ell]}^r$-free $r$-graph on $t$ vertices and $\chi:V(H)\to V(J)$ chosen uniformly at random.  Let $H'\sub H$ be the random subgraph which keeps the hyperedge $e\in E(H)$ if
	\begin{itemize}
		\item[$(1)$] $\chi(e)\in E(J)$, and
		\item[$(2)$] $\chi(e)\ne \chi(f)$ for any other $f\in E(H)$ with $|e\cap f|\ne 0$.
	\end{itemize}
	
	We claim that $H'$ is $\C_{[\ell]}^r$-free.  Indeed, assume $H'$ contained a subgraph $F$ isomorphic to some element of $\C_{[\ell]}^r$.  Let $F'$ be the subgraph of $J$ with $V(F')=\{\chi(u):u\in V(F)\}$ and $E(F')=\{\chi(e):e\in E(F)\}$, and note that $F\sub H'$ implies that each hyperedge of $F$ satisfies (1), so every element of $E(F')$ is a hyperedge in $J$.  By conditions (1) and (2), $\chi$ is a local isomorphism from $F$ to $F'$.  By Lemma~\ref{lem:homB}, $F'\sub J$ contains a Berge cycle of length at most $\ell$, a contradiction to $J$ being $\C_{[\ell]}^r$-free.
	
	It remains to compute $\E[e(H')]$.  Given $e\in E(H)$, let $A_1$ denote the event that (1) is satisfied, let $E_i=\{f\in E(H):|e\cap f|=i\}$, and let $A_2$ denote the event that $\chi(f)\not\sub \chi(e)$ for any $f\in \bigcup_{i} E_i$, which in particular implies (2) for the hyperedge $e$.  It is not too difficult to see that $\Pr[A_1]= r!e(J)t^{-r}$, and that for any $f\in E_i$ we have $\Pr[\chi(f)\sub \chi(e)|A_1]=(r/t)^{r-i}$.  Note for each $i \geq 1$ that $|E_i|\le 2^r \Del_i$, as $e$ has at most $2^r$ subsets of size $i$ each of $i$-degree at most $\Del_i$.  Taking a union bound we find \[\Pr[A_2|A_1]\ge 1-\sum_{i = 1}^r |E_i|(r/t)^{r-i}\ge 1- \sum_{i = 1}^r 2^r \Del_i(r/t)^{r-i}\ge 1-\sum_{i = 1}^r r^{-1}2^{-r}\ge \half,\]
	where the second to last inequality used $(r 4^r)^{i-r}\ge r^{-1}4^{-r}$ for $i \leq r$. Consequently
	\[\Pr[e\in E(H')]=\Pr[A_1]\cdot \Pr[A_2|A_1]\ge r! e(J)t^{-r}\cdot \half \ge e(J)t^{-r},\]
	and linearity of expectation gives $\E[e(H')]\ge e(J)t^{-r}\cdot e(H)=\ex(t,\C_{[\ell]}^r)t^{-r}\cdot e(H)$.  Thus there exists some $\C_{[\ell]}^r$-free subgraph $H'\sub H$
with at least $\ex(t,\C_{[\ell]}^r)t^{-r}\cdot e(H)$ hyperedges.
\end{proof}

By the Chernoff bound one can show for
\[p\ge p_1:=n^{\f{-(r-1)(\ell-1-k)}{\ell-1}}\]
that a.a.s. $H_{n,p}^r$ has maximum $i$-degree at most $\Theta(pn^{r-i})$ for all $i$.   If Conjecture \ref{conj:girth} is true,
then a.a.s for $p \geq p_1$ Lemma \ref{lem:homGen} with $t = \Theta(p^{1/(r-1)}n)$ gives:
\[\ex(H_{n,p}^r,\C_{[\ell]}^r) = \Omega(t^{-r} \ex(t,\C_{[\ell]}^r) pn^{r}) = p^{\rec{(r - 1)k}}n^{1 + \rec{k} - o(1)}.\]
This proves the lower bound in Theorem \ref{thm:randGirth}.  \hfill $\blacksquare$

\medskip

{\bf Proof of the lower bound in Theorem \ref{thm:randTri}.}  We use the following variant of Lemma~\ref{lem:homGen}:
\begin{lem}\label{lem:homTri}
	Let $H$ be an $r$-graph and let $R_{\ell,v}(H)$ be the number of Berge $\ell$-cycles in $H$ on $v$ vertices.  For all $t\ge 1$, $H$ has a subgraph $H'$ of girth larger than $3$ with
	\[ e(H') \ge \l(e(H)t^{2-r}-\sum_{\ell=2}^3\sum_{v}t^{2-v}R_{\ell,v}(H)\r)e^{-c\sqrt{\log t}},\]
	where $c>0$ is an absolute constant.
\end{lem}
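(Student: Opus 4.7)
My plan is to mimic the proof of Lemma~\ref{lem:homGen} but replace the no-collision step by an alteration argument. Let $J$ be an extremal $\C_{[3]}^r$-free $r$-graph on $t$ vertices, so that the Ruzsa-Szemer\'edi construction and its hypergraph extension due to Erd\H{o}s, Frankl, and R\"odl give $e(J)=\ex(t,\C_{[3]}^r)\ge t^2 e^{-c_0\sqrt{\log t}}$ for some absolute constant $c_0>0$. Choose $\chi:V(H)\to V(J)$ uniformly at random and let $H_0\subseteq H$ consist of hyperedges $e$ with $\chi(e)\in E(J)$. Obtain $H'$ from $H_0$ by deleting one hyperedge from each Berge $2$-cycle and each Berge $3$-cycle contained in $H_0$; the resulting $H'$ has girth more than $3$ by construction, so it suffices to lower-bound $\E[e(H')]$.

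By linearity, $\E[e(H_0)]=e(H)\cdot r!\cdot e(J)/t^r$. The main task is to bound the expected number of surviving Berge $\ell$-cycles for $\ell\in\{2,3\}$. For a Berge $\ell$-cycle $F\subseteq H$ on $v$ vertices with hyperedges $e_1,\ldots,e_\ell$ and principal vertices $v_1,\ldots,v_\ell$, the key claim is that $F\subseteq H_0$ forces $\chi(e_1)=\cdots=\chi(e_\ell)=f$ for a single $f\in E(J)$. Indeed, any two principal vertices $v_i,v_j$ that lie in a common edge $e_k$ of $F$ must satisfy $\chi(v_i)\ne \chi(v_j)$, else $|\chi(e_k)|<r$ and so $\chi(e_k)\notin E(J)$; since $\ell\in\{2,3\}$, every pair of principal vertices lies in some common edge, so the $\chi(v_i)$ are pairwise distinct. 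If additionally the $\chi(e_i)$ were pairwise distinct, then $\chi(e_1),\ldots,\chi(e_\ell)$ through the distinct $\chi(v_i)$ would form a Berge $\ell$-cycle in $J$, contradicting that $J$ is $\C_{[3]}^r$-free. Hence $\chi(e_i)=\chi(e_j)=f$ for some $i\ne j$; the linearity of $J$ then forces every remaining $\chi(e_k)$ to equal $f$, since $\chi(e_k)$ already contains two of the distinct images $\chi(v_1),\ldots,\chi(v_\ell)$ that lie in $f$.

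Combining the collapsing claim with the trivial upper bound $r^v$ on the number of maps $V(F)\to V(f)$ yields $\Pr[F\subseteq H_0]\le r^v\,e(J)/t^v$. Summing and using $e(J)/t^2\ge e^{-c_0\sqrt{\log t}}$,
\[\E[e(H')]\ge \frac{e(J)}{t^2}\left(r!\,e(H)\,t^{2-r}-C_r\sum_{\ell=2}^3\sum_v R_{\ell,v}(H)\,t^{2-v}\right)\ge e^{-c_0\sqrt{\log t}}\left(r!\,A-C_r B\right),\]
where $A=e(H)\,t^{2-r}$, $B=\sum_{\ell,v}t^{2-v}R_{\ell,v}(H)$, and $C_r\le r^{3r}$ depends only on $r$. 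Selecting a realization of $\chi$ attaining this expectation and absorbing the multiplicative constants $r!$ and $C_r$ into a slightly enlarged $c>c_0$ (valid once $t$ is large relative to $r$; for small $t$ the bound is trivial since the RHS is bounded) yields the claimed inequality.

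The main obstacle is the collapsing claim for Berge $3$-cycles. One has to rule out the configuration where exactly two of the three $\chi(e_i)$ coincide: the key point is that once, say, $\chi(e_1)=\chi(e_2)=f$, the third image $\chi(e_3)$ must contain the two distinct principal images $\chi(v_2),\chi(v_3)\in f$, and hence must equal $f$ by linearity of $J$. Verifying that this argument interacts correctly with the alteration step and absorbing the resulting constants into the $e^{-c\sqrt{\log t}}$ factor is where the proof requires care.
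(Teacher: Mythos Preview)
Your proof is correct and follows essentially the same approach as the paper: a random map $\chi$ into an extremal Ruzsa--Szemer\'edi/Erd\H{o}s--Frankl--R\"odl $r$-graph $J$, the collapsing claim that any surviving Berge $2$- or $3$-cycle must map into a single edge of $J$ (via linearity of $J$ and distinctness of the principal images), followed by deletion. Your treatment of the collapsing claim and of the final constant absorption is in fact slightly more explicit than the paper's, which simply writes ``the result follows since $e(J)=t^2e^{-c\sqrt{\log t}}$''; both versions share the same mild looseness regarding the $r^v$ factors on the subtracted terms and the dependence of $c$ on $r$.
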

\begin{proof}
	By work of Ruzsa and Szemeredi~\cite{RS} and Erd\H{o}s, Frankl, R\"odl~\cite{EFR}, it is known for all $t$ that there exists a $\C_{[3]}^r$-free $r$-graph $J$ on $t$ vertices with $t^2e^{-c\sqrt{\log t}}$ hyperedges. Choose a map $\chi:V(H)\to V(J)$ uniformly at random and define $H''\sub H$ to be the subgraph which keeps a hyperedge $e=\{v_1,\ldots,v_r\}\in E(H)$ if and only if $\chi(e)\in E(J)$.
	
	We claim that if $e_1,e_2,e_3$ form a Berge triangle in $H''$, then $\chi(e_1)=\chi(e_2)=\chi(e_3)$.  Observe that if $v_1,v_2,v_3$ are vertices with $v_i\in e_i\cap e_{i+1}$, then we must have e.g. $\chi(v_1)\ne \chi(v_2)$, as otherwise $|\chi(e_2)|<r$. Because $J$ is linear we must have $|\chi(e_i)\cap \chi(e_j)|\in\{1,r\}$. These hyperedges can not all intersect in 1 vertex since this together with the distinct vertices $\chi(v_1),\chi(v_2),\chi(v_3)$ defines a Berge triangle in $H''$, so we must have say $\chi(e_1)=\chi(e_2)$. But this means $\chi(v_3),\chi(v_2)$ are distinct vertices in $\chi(e_1)=\chi(e_2)$ and $\chi(e_3)$, so $|\chi(e_1)\cap \chi(e_3)|>1$ and we must have $\chi(e_1)=\chi(e_3)$ as desired.
	
	The probability that a given Berge triangle $C$ on $v$ vertices in $H$ maps to a given hyperedge in $J$ is at most $(r/t)^{v}$ (since this is the probability that every vertex of $C$ maps into the edge of $J$).  By linearity of expectation, $H''$ contains at most $\sum_v R_{3,v}(H)e(J)(r/t)^v$ Berge triangles in expectation.  An identical proof shows that $H''$ contains at most $\sum_v R_{2,v}(H)e(J)(r/t)^v$ Berge 2-cycles in expectation.  We can then delete a hyperedge from each of these Berge cycles in $H''$ to find a subgraph $H'$ with
	\[\E[e(H')]\ge e(J)t^{-r}\cdot e(H)-\sum_{\ell=2}^3\sum_vR_{\ell,v}(H)e(J)(r/t)^v.\]
	The result follows since $e(J)=t^2e^{-c\sqrt{\log t}}$.
\end{proof}

We now prove the lower bound in Theorem \ref{thm:randTri}. By Markov's inequality one can show that a.a.s. $R_{3,3r-3}(H_{n,p}^r)=O(p^3n^{3r-3})$.  By the Chernoff bound we have a.a.s. that $e(H_{n,p}^r)=\Om(pn^r)$, so if we take $t=p^{2/(2r-3)}n(\log n)^{-1}$, then a.a.s. $t^{5-3r}R_{3,3r-3}(H_{n,p}^r)$ is significantly smaller than $t^{2-r}e^(H_{n,p}^r)$. A similar result holds for each term $t^{2-v}R_{\ell,v}(H_{n,p}^r)$ with $\ell=2,3$ and $v\le \ell(r-1)$, so by Lemma~\ref{lem:homTri} we conclude $\ex(H_{n,p}^r,\C_{[3]}^r)]\geq p^{1/(2r - 3)}n^{2 - o(1)}$ a.a.s., proving the lower bound in Theorem \ref{thm:randTri}.  \hfill $\blacksquare$

\medskip

We note that the proof of Lemma~\ref{lem:homTri} fails for larger $\ell$.  In particular, a Berge 4-cycle can appear in $H''$ by mapping onto two edges in $J$ intersecting at a single vertex, and with this the bound becomes ineffective.

\section{Concluding remarks}

$\bullet$ In this paper, we extended ideas of Balogh and Li to bound the number of $n$-vertex $r$-graphs with $m$ edges and girth more
than $\ell$ in terms of the number of $n$-vertex graphs with $m$ edges and girth more than $\ell$. The reduction is best possible when $m = \Theta(n^{\ell/(\ell - 1)})$
and $\ell - 2$ divides $r - 2$. Theorem~\ref{thm:reduceR3} shows that similar reductions can be made when forbidding a single family of Berge cycles.  

By using variations of our method, we can prove the following generalization. For a graph $F$, a hypergraph $H$ is a \textit{Berge}-$F$ if there exists a bijection $\phi:E(F)\to E(H)$ such that $e\sub \phi(e)$ for all $e\in E(F)$. Let $\c{B}^r(F)$ denote the family of $r$-uniform Berge-$F$.  We can prove the following extension of Theorem \ref{thm:reduceR3}: if there exists a vertex $v\in V(F)$ such that $F-v$ is a forest, then there exists $c = c(F,r)$ such that
\begin{align*}
	\N_m^r(n,\c{B}^r(F))&\le 2^{cm}\cdot \N_{[m]}^2(n,F)^{r!/2}.
\end{align*}
For example, this result applies when $F$ is a theta graph.  We do not believe that the exponent $r!/2$ is optimal in general, and we propose the following problem.
\begin{problem}
	Let $\ell,r \geq 3$. Determine the smallest value $\be=\be(\ell,r)>0$ such that there exists a constant $c=c(\ell,r)$ so that, for all $m,n \geq 1$, \[\N_m^r(n,\C_\ell^r) \le 2^{cm}\cdot \N_{[m]}^2(n,C_{\ell})^{\be}.\]
\end{problem}
Theorem~\ref{thm:reduceR3} shows that $\be\le r!/2$ for all $\ell,r$, but in principle we could have $\be=O_\ell(r)$.  We claim without proof that it is possible to use variants of our methods to show $\be(3,r),\be(4,r)\le {r\choose 2}$, but beyond this we do not know any non-trivial upper bounds on $\be$.

\medskip

$\bullet$ We proposed Conjecture \ref{conj:rand} on the extremal function for subgraphs of large girth in random hypergraphs:
for some constant $\gamma = \gamma(\ell,r)$, a.a.s.:
\[ \ex(H_{n,p}^r,\C_{[\ell]}^r) = \left\{\begin{array}{ll}
n^{1 + \frac{1}{\ell - 1} + o(1)} & n^{-r + 1 + \frac{1}{\ell - 1}} \leq p < n^{-\frac{\gamma(\ell - 1 - k)}{\ell - 1}}, \\
p^{\frac{1}{\gamma k}} n^{1 + \frac{1}{k} + o(1)} &  n^{-\frac{\gamma(\ell - 1 - k)}{\ell - 1}} \leq p \leq 1.
\end{array}\right.\]
For $\ell = 3$, this conjecture is true with $\gamma = 2r - 3$, and Conjecture \ref{conj:bestexponent} suggests perhaps $\gamma = r - 1 + (r - 2)/(\ell - 2)$, although
we do not have enough evidence to support this (see also the work of Mubayi and Yepremyan~\cite{MY} on loose even cycles). It would be interesting as a test case to know if $\gamma(3,4) = 5/2$:

\begin{problem}
Prove or disprove that Conjecture \ref{conj:rand} holds with $\gamma(3,4) = 5/2$.
\end{problem}

$\bullet$ It seems likely that $\N_m^r(n,\c{F})$ controls the a.a.s. behavior of $\ex(H_{n,p}^r,\c{F})$ as $n \rightarrow \infty$. Specifically,
it is clear that if $\c{F}$ is a family of finitely many $r$-graphs and $p = p(n)$ and $m = m(n)$ are defined so that
$p^m \N_m^r(n,\c{F}) \rightarrow 0$ as $n \rightarrow \infty$, then a.a.s. as $n \rightarrow \infty$, $H_{n,p}^r$ contains no $\c{F}$-free subgraph with at least $m$ edges.
It would be interesting to determine when $H_{n,p}^r$ a.a.s contains an $\c{F}$-free subgraph with at least $m$ edges. In particular, we leave the following problem:

\begin{problem}
Let $m = m(n)$ and $p = p(n)$ so that $p^m \N_m^r(n,\ell) \rightarrow \infty$ as $n \rightarrow \infty$. Then $H_{n,p}^r$ a.a.s contains a subgraph of girth more than $\ell$ with at least $m$ edges.
\end{problem}

In particular, perhaps one can obtain good bounds on the variance of $\N_m^r(n,\ell)$ in $H_{n,p}^r$.

\textbf{Acknowledgments.}  We thank the referees for their helpful comments, as well as Emanuele Natale and \'Edouard Oyallon for pointing out an error in our statement of the upper bound for Theorem~\ref{thm:randGirth} when $\ell$ is odd.

\bibliographystyle{abbrv}
\bibliography{GT}

\section*{Appendix: Proof of Theorem~\ref{thm:MS}}
Here we give a formal proof of Theorem~\ref{thm:MS}.  The key tool will be the following theorem of Morris and Saxton.
\begin{theorem}[\cite{MS} Theorem 5.1]\label{thm:MSTech}
	For each $k\ge 2$, there exists a constant $C=C(k)$ such that the following holds for sufficiently large $t,n\in \mathbb{N}$ with $t\le n^{(k-1)^2/k(2k-1)}/(\log n)^{k-1}$.  There exists a collection $\c{G}_k(n,t)$ of at most 
	\[\exp(Ct^{-1/(k-1)}n^{1+1/k}\log t)\]
	graphs on $[n]$ such that $e(G)\le t n^{1+1/k}$ for all $G\in \c{G}_k(n,t)$ and such that every $C_{2k}$-free graph is a subgraph of some $G\in \c{G}_k(n,t)$.
\end{theorem}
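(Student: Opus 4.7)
The plan is to prove Theorem \ref{thm:MSTech} by applying the hypergraph container method of Balogh--Morris--Samotij and Saxton--Thomason to the $2k$-uniform hypergraph $\c{H}_n$ on vertex set $E(K_n)$ whose hyperedges are the edge-sets of copies of $C_{2k}$ in $K_n$. A graph $G \subseteq K_n$ is $C_{2k}$-free if and only if $E(G)$ is independent in $\c{H}_n$, so the theorem is exactly a container statement for $\c{H}_n$ with the quantitative parameters specified.

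The technical core, which I would establish first, is a balanced supersaturation statement for $C_{2k}$: there exist constants $\delta, C_0 > 0$ depending only on $k$ such that whenever $s \ge C_0 (\log n)^{k-1}$ and $G$ is an $n$-vertex graph with $e(G) \ge s n^{1+1/k}$, there is a family $\c{F}(G)$ of copies of $C_{2k}$ in $G$ satisfying $|\c{F}(G)| \ge \delta s^{2k-1} e(G)$ and, for each $j \in \{1,\dots,2k-1\}$ and each $\sigma \subseteq E(G)$ with $|\sigma| = j$, a uniform co-degree bound of the form
\[ d_{\c{F}(G)}(\sigma) \;\le\; K(k)\cdot \frac{|\c{F}(G)|}{s^{j-1}\, e(G)}. \]
This step is the main obstacle. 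I would prove it by (i) using a K\H{o}v\'ari--S\'os--Tur\'an style moment count of $k$-paths between pairs of vertices in $G$, (ii) assembling $C_{2k}$'s from pairs of such paths via a BFS tree at each edge to avoid vertex collisions, and (iii) iteratively deleting edges whose co-degrees exceed the target thresholds, verifying via a charging argument that each deletion destroys only a controlled fraction of $\c{F}(G)$. The hypothesis $s \ge C_0(\log n)^{k-1}$ enters precisely to guarantee that the deletion phase terminates with a positive proportion of $\c{F}(G)$ surviving, and it propagates directly into the upper bound on $t$ in the theorem statement.

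Granted balanced supersaturation, I would feed $\c{H}_n$ into the hypergraph container lemma. A single round assigns to each $C_{2k}$-free $H$ with $e(H) \ge s n^{1+1/k}$ a fingerprint $S \subseteq E(H)$ of size $O\bigl(s^{-1/(2k-1)} n^{1+1/k}\bigr)$ and a container $G \supseteq H$ with $e(G) \le (1 - c)\, e(H)$; the co-degree bounds of all orders $j \in \{1,\dots,2k-1\}$ are exactly what the container lemma requires to produce such an $S$. I would iterate this reduction $O(\log(n/t))$ times, geometrically shrinking the edge count until every container has at most $t n^{1+1/k}$ edges. The hypothesis $t \le n^{(k-1)^2/k(2k-1)}/(\log n)^{k-1}$ is precisely what ensures $s \ge C_0(\log n)^{k-1}$ remains valid throughout the iteration.

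The final bookkeeping sums the logarithms of container counts across iterations, which form a geometric series dominated by the final round at $s = t$. Telescoping yields at most $\exp\bigl(C t^{-1/(k-1)} n^{1+1/k} \log t\bigr)$ containers, where the exponent $-1/(k-1)$ in place of the naive $-1/(2k-1)$ reflects how the co-degree savings of orders $j = 1, \dots, 2k-1$ combine multiplicatively through the iterated container applications, and $\log t$ rather than $\log n$ arises because each iteration of the container step only needs to label the fingerprint within the current container rather than within all of $K_n$. I expect essentially the entire technical weight of the argument to lie in the balanced supersaturation step; the container iteration and the accounting, while delicate, follow a template that is by now standard in this area.
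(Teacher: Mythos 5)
First, a remark on scope: the paper does not prove Theorem~\ref{thm:MSTech} at all — it is quoted verbatim as Theorem~5.1 of Morris and Saxton~\cite{MS} and used as a black box in the appendix to derive Theorem~\ref{thm:MS}. So the comparison is really against Morris and Saxton's original argument. Your high-level plan (encode $C_{2k}$'s as a $2k$-uniform hypergraph on $E(K_n)$, prove a balanced supersaturation theorem, then iterate the container lemma with the total dominated geometrically by the final round at $s=t$) is indeed the route they take.

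However, the specific co-degree bound you propose is too weak, and this is not a cosmetic issue — it changes the exponent in the final container count. You claim balanced supersaturation with
\[
d_{\c{F}(G)}(\sigma) \le K(k)\cdot \frac{|\c{F}(G)|}{s^{\,j-1}\,e(G)},
\]
which feeds into the container lemma with $\tau\sim 1/s$ and yields a per-round fingerprint of size $\sim \tau\cdot e(G) = n^{1+1/k}$, with \emph{no} dependence on $s$. Iterating this gives only $\exp\bigl(O(n^{1+1/k}(\log n)^2)\bigr)$ containers, which is the bound one gets from a generic container argument and is strictly weaker than $\exp\bigl(Ct^{-1/(k-1)}n^{1+1/k}\log t\bigr)$. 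The crux of Morris--Saxton's supersaturation theorem is the sharper co-degree decay
\[
d_{\c{F}(G)}(\sigma) \le K(k)\cdot \frac{|\c{F}(G)|}{s^{\,(j-1)k/(k-1)}\,e(G)},
\]
i.e.\ the co-degree drops by a factor $s^{k/(k-1)}$ (not $s$) per additional edge in $\sigma$. With $\tau\sim s^{-k/(k-1)}$ the per-round fingerprint becomes $s^{-1/(k-1)}n^{1+1/k}$, which is exactly what makes the final sum come out to $t^{-1/(k-1)}n^{1+1/k}\log t$. Your explanation that the exponent $-1/(k-1)$ arises because ``co-degree savings of orders $j=1,\dots,2k-1$ combine multiplicatively through the iterated container applications'' is not correct: the iterations sum (geometrically, dominated by $s=t$), and the $-1/(k-1)$ comes directly from the supersaturation co-degree exponent, not from any multiplicative gain across rounds. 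Relatedly, your stated single-round fingerprint $O(s^{-1/(2k-1)}n^{1+1/k})$ is also off; the correct size already carries the $s^{-1/(k-1)}$ factor. Establishing the co-degree bound with exponent $(j-1)k/(k-1)$ is precisely where the real technical difficulty lies — Morris and Saxton achieve it through a careful iterative construction of the family $\c{F}(G)$ itself, not by deleting high-co-degree edges from the set of \emph{all} $C_{2k}$'s as in your step~(iii), which would not reach the required strength.
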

Recall that we wish to prove that for $\ell\ge 3$ and $k = \lfloor \ell/2\rfloor$, there exists a constant $c>0$ such that if $n$ is sufficiently large and $m\ge n^{1+1/(2k - 1)}(\log n)^2$, then
\[\N_m^2(n,\C_{[\ell]}) \le e^{cm}(\log n)^{(k - 1)m}\l(\f{n^{1+1/k}}{m}\r)^{k m}.\]
The bound is trivial if $\ell=3$ since $\N_m^2(n,C_3)\le {n^2\choose m}$, so we may assume $\ell\ge 4$ from now on.  Because $\N_m^2(n,\C_{[\ell]})\le \N_m^2(n,C_{2k})$ for all $\ell\ge 4$, it suffices to prove this bound for $\N_m^2(n,C_{2k})$.  For any integer $t\le n^{(k-1)^2/k(2k-1)}/(\log n)^{k-1}$ and $n$ sufficiently large, Theorem~\ref{thm:MSTech} implies
\begin{equation}\N_m^2(n,C_{2k})\le |\c{G}_k(n,t)|\cdot {t n^{1+1/k}\choose m}\le\exp(Ct^{-1/(k-1)}n^{1+1/k}\log t)\cdot (etn^{1+1/k}/m)^m,\label{eq:MS}\end{equation}
with the first inequality using that every $C_{2k}$-free graph on $m$ edges is an $m$-edged subgraph of some $G\in \c{G}_k(n,t)$.  By taking $t=(n^{1+1/k}\log n/m)^{k-1}$, which is sufficiently small to apply \eqref{eq:MS} provided $m\ge n^{1+1/(2k - 1)}(\log n)^2$, we see that $\N_m^2(n,C_{2k})$ satisfies the desired inequality. \hfill $\blacksquare$
\end{document}